\newtheorem{theorem}{Theorem}[section]
\newtheorem{lemma}[theorem]{Lemma}
\newtheorem{proposition}[theorem]{Proposition}
\newtheorem{corollary}[theorem]{Corollary}
\newtheorem{property}[theorem]{Property}
\theoremstyle{definition}
\newtheorem{definition}[theorem]{Definition}
\newtheorem{example}[theorem]{Example}
\newtheorem{remark}[theorem]{Remark}
\numberwithin{equation}{section}
\DeclareMathSymbol{\perp}{\mathrel}{symbols}{"3F}
\DeclareMathSymbol{\bot}{\mathord}{symbols}{"3F}
\def\R{\mathbb R}
\DeclareMathOperator{\argmax}{argmax}
\newcommand{\e}{\mathbf{e}}
\newcommand{\A}{\mathbf{A}}
\newcommand{\B}{\mathbf{B}}
\newcommand{\I}{\mathbf{I}}
\newcommand{\x}{\mathbf{x}}
\newcommand{\rowp}[2]{\ensuremath{{#1}_{#2}^+}}
\newcommand{\rowm}[2]{\ensuremath{#1_#2^-}}
\newcommand{\rowmm}[3]{\ensuremath{#1_{#2#3}^-}}
\newcommand{\rowpp}[3]{\ensuremath{#1_{#2#3}^+}}
\pgfplotsset{compat=1.18}
\begin{document}
\setcounter{page}{1}

\vspace*{1.8cm}
\title[Localization of complementarity eigenvalues]
{Localization of complementarity eigenvalues}
\author[A. Sasaki, S. Demassey, V. Sessa]{Antonio Sasaki$^{1}$, Sophie Demassey$^{1}$, Valentina Sessa$^{1}$}
\maketitle
\vspace*{-0.6cm}

\begin{center}
{\footnotesize

$^1$Centre de Math\'ematiques Appliqu\'ees (CMA), \'Ecole nationale sup\'erieure des Mines de Paris (Mines Paris), Universit\'e Paris Sciences et Lettres (PSL), Sophia Antipolis, France\\
\vspace{0.2cm}

Dedicated to Professor Alfredo N. Iusem on the occasion of his 75th birthday.

}\end{center}

\vskip 4mm {\footnotesize \noindent {\bf Abstract.}
Let $\mathbf{A},\mathbf{B}$ be symmetric $n\times n$ real matrices with $\mathbf{B}$ positive definite and strictly diagonally dominant. We derive two localization sets for the complementarity eigenvalues of $(\A,\B)$, the tightest one assuming additionally that $\mathbf{A}$ is copositive. This extends He-Liu-Shen sets to the case where $\B$ is not the identity. Moreover, we compare the computable bounds obtained from these new sets with the extreme classical generalized eigenvalues.

 \noindent {\bf Keywords.} Complementarity problems; Gershgorin-type localization sets. 

 \noindent {\bf 2020 Mathematics Subject Classification.}
90C33, 15A18.}

\renewcommand{\thefootnote}{}
\footnotetext{
E-mail address: \{antonio.sasaki,  sophie.demassey, valentina.sessa\}@minesparis.psl.eu
}

\section{Introduction}
Let $\A,\B$  be symmetric $n\times n$ real matrices and $\B$ be positive definite. The symmetric \emph{eigenvalue complementarity problem} (EiCP) is to find $\x\in\R^n \setminus \{\mathbf{0}\}$ and $\lambda\in\R$ satisfying:
\begin{subequations}\label{eq:EiCP}
\begin{gather}
(\A-\lambda \B)\x\ge \mathbf{0} \label{eq:EiCP_a}\\
\x\ge \mathbf{0} \label{eq:EiCP_b}\\
\x^\top(\A-\lambda \B)\x=0 \label{eq:EiCP_c}\\
\e ^\top \x =1 \label{eq:EiCP_d}, 
\end{gather}
\end{subequations}
where $\e := (1,\dots,1)^\top \in \R^n$ is the all-ones vector and the inequalities are meant to be component-wise. For a pair $(\x,\lambda)$ satisfying \eqref{eq:EiCP}, $\lambda \in \R$ is a \emph{complementarity eigenvalue} and $\x \in \R^n$ is a \emph{complementarity eigenvector}. The orthogonality condition~\eqref{eq:EiCP_c} together with the nonnegativity requirements~\eqref{eq:EiCP_a} and~\eqref{eq:EiCP_b} imply that $x_i = 0$ or $w_i=0$ with $\mathbf{w}= (\A-\lambda \B)\, \x$
for all $i = 1,...,n$.

Seeger introduced this problem in~\cite{Seeger1999} for the case when $\B$ is equal to the identity matrix. In this context, this problem is often referred to as the \emph{Pareto eigenvalue problem} because the complementarity spectrum is then attached to a single matrix $\A$ under the nonnegativity cone (also known as Pareto cone). Queiroz et al.~\cite{Queiroz2004} subsequently extended the formulation to general symmetric pairs $(\A,\B)$, motivated by the stability analysis of mechanical systems with frictional contacts.
The practical importance of EiCP is further underscored by applications in copositive matrix analysis \cite{HiriartUrruty2010}, asymptotic studies of Fucik curves \cite{Holubova2015}, graph theory \cite{Fernandes2017, Seeger2018}, and linear dynamical systems governed by complementarity conditions \cite{Seeger1999}.

Theoretical and numerical properties of the symmetric EiCP have been investigated in recent years, as well as different generalizations~\cite{bras2015,BIJu,FNZh,iusem2016,IJSS2
}.
%
The numerical resolution of EiCP is often based on its reformulation as a more manageable problem, for which specialized algorithms can be designed.
For instance, in~\cite{Judice2021}, the EiCP is formulated as a difference-of-convex program over the standard simplex, then solved with an alternating direction method of multipliers, which globally converges to a solution of the symmetric EiCP.
In~\cite{Fukushima2020}, the symmetric  EiCP is reformulated as
finding a stationary point to the minimization
of the sum of a convex function and a smooth but nonconvex function. 
An algorithm, that solves a sequence of convex quadratic problems obtained by linearizing the nonconvex term,
is shown to converge  to a solution of the EiCP under mild assumptions.
%
A different reformulation, as a fractional program, is to minimize the ratio of two quadratic functions, namely the generalized Rayleigh quotient, over the standard simplex~\cite{JRRS,Queiroz2004}.
In~\cite{judice2022}, a stationary point 
is computed by using an efficient implementation of the 
Dinkelbach’s method \cite{Schaible1976} 
after linearizing the numerator of the objective function around a current iterate.
In~\cite{AdSe}, the EiCP is formulated as an equivalent nonsmooth system of equations based on complementarity functions~\cite{FaPa} and solved using a semi-smooth Newton method.
In~\cite{adly2023}, the EiCP is formulated as a nonlinear system of equations,  solved with  two versions of the interior point method, which appear to be numerically more efficient than the smoothing method on medium-sized instances ($n\approx 100$), but not on smaller instances.


EiCP~\eqref{eq:EiCP} extends the generalized eigenvalue problem~\cite{HornJohnson2013}, defined as $\A\x =\lambda\B\x$, that is \eqref{eq:EiCP_a} satisfied at equality and dropping \eqref{eq:EiCP_b} and  \eqref{eq:EiCP_c}. 
Note that if $(\x,\lambda)$ solves \eqref{eq:EiCP} with $x_i>0$, for all $i=1,...n$, the orthogonality constraint~\eqref{eq:EiCP_c} forces $\A\x-\lambda\B\x=\mathbf{0}$. More generally, if a subvector  $\x_S>\mathbf{0}$ on an index set $S \subset \{1,\dots,n\}$, then $(\x_S,\lambda)$ is a generalized eigenpair of the submatrices $(\A_{SS},\B_{SS})$ \cite{Fernandes2014}. This submatrix perspective suggests a constructive path: by scanning principal submatrices and computing classical generalized eigenpairs, one can enumerate complementarity eigenvalues~\cite{Seeger1999}.
This procedure is computationally viable only for very small instances ($n \leq 15$). Unlike the generalized eigenvalue problem, which has at most $n$ solutions, the system \eqref{eq:EiCP} may exhibit exponentially many complementarity eigenpairs as $n$ grows, at most $2^n-1$, then exhaustive enumeration becomes prohibitive \cite{Fernandes2014}. 

%

Recently, He et al. \cite{He2023} resolved an open question of Seeger \cite{Seeger1999}, regarding the location of the complementarity eigenvalues when $\B$ is the identity matrix. The authors derived computable localization sets, as intervals in $\mathbb{R}$, 
that depend only on row sums of $\A$, in a way related to the Gershgorin circle theorem \cite{HornJohnson2013} for ordinary eigenvalues. These results provide explicit algebraic bounds for $\lambda$.

In this paper, we address the same question for the symmetric EiCP \eqref{eq:EiCP} with $\B$ strictly diagonally dominant. It is worth mentioning that generalizing $\B$ from the identity matrix to any $\B$  positive definite and strictly diagonally dominant represents a contribution in this context. Although it may appear restrictive, the positive definiteness of $\B$ is commonly assumed in EiCPs to ensure the existence of a solution~\cite{Fernandes2014, Fukushima2020, Judice2021, judice2022, Queiroz2004}, and the diagonal dominance assumption is also standard and frequently employed in the localization theory of various eigenvalue classes via Gershgorin-type sets~\cite{kostic2015, nakatsukasa2011, singh2023}.  
Extending the approach in \cite{He2023}, we present localization sets, based on either single rows or pairs of rows, under the extra assumption that $\A$ is copositive in the latter case. When $\B$ is the identity, our formulas reduce to those in \cite{He2023}, thereby recovering the Pareto setup. We also show that the two-row refinement provides tighter sets than the one-row analysis and that there is no dominance between the bounds provided by these localization sets and the extreme (largest and smallest) generalized eigenvalues. 

This paper is organized as follows: first we introduce the necessary notation and preliminary concepts. In Section~\ref{sec2}, we derive the one-row localization set $K_1$ and the two-row localization set $K_2$ under the additional hypothesis that $\mathbf{A}$ is copositive. We then prove in Section~\ref{sec3} that $K_2$ is contained in $K_1$. Section~\ref{sec:lbub} extracts computable lower and upper bounds from these sets. Finally, Section~\ref{section5} compares our localization sets with the standard generalized eigenvalue spectrum, and Sections~\ref{secd} and~\ref{secc} provide concluding remarks and discuss potential extensions.


\section*{Notation and Preliminaries}

We indicate with $[n]$ the set of natural numbers between $1$ and $n$, i.e., $[n]:= \{1,...,n\}$.  Let $\R^n$, $\R^n_+$ and $\mathbb{S}^n$ denote the $n$-dimensional Euclidean space, the nonnegative orthant, and the space of $n \times n$ real symmetric matrices, respectively. 

For a vector $\x \in \R^n$, we denote its $j$-th component by $x_j,\: j \in [n]$. The element $(i,j)$ of a matrix $\mathbf{M} \in \R^{n\times n}$ is denoted as $m_{ij}$, and $\mathbf{M} ^\top _i$ represents the $i$-th row of $\mathbf{M}$.  

The $j$-th standard basis vector in $\R^n$ is denoted by $\e_j$. We denote by $\e \in \R^n$ the $n$-dimensional vector of ones and by $\mathbf{E} = \e \e ^\top \in \mathbb{S}^n$ the matrix with all elements being equal to one. 

\begin{definition}[Diagonal dominance]\label{def: diagonal dominance}
Let $\mathbf{M}\in\mathbb{R}^{n\times n}$. 
We say that $\mathbf{M}$ is \emph{diagonally dominant} (DD) if $|m_{ii}| \,\ge\, \sum_{j\neq i} |m_{ij}| \ \text{for every } i\in[n]$. It is \emph{strictly diagonally dominant} ($\operatorname{SDD}$) if $|m_{ii}|~\,>\,~\sum_{j\neq i} |m_{ij}| \ \text{for every } i\in[n]$.
\end{definition}

\begin{definition}[Positivity]\label{def:positivity}
Let $\mathbf{A}\in\mathbb{S}^n$. We say that $\mathbf{A}$ is \emph{positive semidefinite} (PSD) if $\mathbf{x}^\top \mathbf{A}\,\mathbf{x}~\ge~0$ for all $\mathbf{x}\in\mathbb{R}^n$. It is \emph{positive definite} ($\operatorname{PD}$) if $\mathbf{x}^\top \mathbf{A}\,\mathbf{x} \,>\, 0 \ \text{for all } \mathbf{x}\in\mathbb{R}^n \text{ with } \mathbf{x}\neq \mathbf{0}$.
\end{definition}

\begin{definition}[Copositivity]\label{def: copositivity}
Let $\mathbf{A}\in\mathbb{S}^n$. We say that $\mathbf{A}$ is \emph{copositive} if $\mathbf{x}^\top \mathbf{A}\,\mathbf{x}\;\ge\; 0$ for all  $\mathbf{x}\in\mathbb{R}^n$ with $\x \geq \mathbf{0}$.
It is \emph{strictly copositive} if $\mathbf{x}^\top \mathbf{A}\,\mathbf{x}\;>\; 0 \ \text{for all } \mathbf{x}\in\mathbb{R}^n$ with $\x>\mathbf{0}$.
\end{definition}

Clearly, any nonnegative or positive definite matrix is copositive. 
It is also known that symmetric SDD matrices with positive diagonal elements are PD (see Thm. 6.1.10 in~\cite{HornJohnson2013}).

\begin{property} \label{pro:1}
$\operatorname{EiCP}(\A, \B)$ has a solution $(\x, \lambda)$ if and only if $\operatorname{EiCP}(\A + \mu \B, \B)$ has a solution $(\x,\lambda +\mu)$ with $\mu \geq 0$. 
\end{property}
Note that since $\B$ is $\operatorname{PD}$, then for some $\mu>0$ the matrix $\A+\mu \B$ is also $\operatorname{PD}$~\cite{HornJohnson2013}. So, we can assume without loss of generality that $\A$ is a $\operatorname{PD}$ (or at least copositive) matrix in EiCP \eqref{eq:EiCP}. 

\begin{property}\label{prop:nonnegla}
Let $\mathbf{A}, \B \in\mathbb{S}^n$ and $\B$ be positive definite. If $\A$ is a copositive matrix, then in any solution $(\x,\lambda)$ of the $\operatorname{EiCP}(\A,\B)$, the complementarity eigenvalue $\lambda$ is nonnegative.
\end{property}

A solution $(\x,\lambda)$ of the EiCP satisfies the orthogonality condition~\eqref{eq:EiCP_c}, then $\x \geq \mathbf{0}$ and 
\begin{equation}\label{eq:Rayquo}
\lambda~=~\frac{\x^\top \A \x}{\x^\top \B \x}
\end{equation}
is the 
\textit{generalized Rayleigh quotient} of $(\A,\B)$~\cite{Queiroz2004}. If $\A$ is copositive and $\B$ is PD, then by Definitions~\ref{def:positivity}
and \ref{def: copositivity}, we obtain the result in Property~\ref{prop:nonnegla}. 

\section{Localization sets for EiCP solutions}\label{sec2}

If $\A$ and $\B$ are diagonal matrices, the complementarity eigenvalues can be located easily. Indeed, they are the points ${a_{ii}}/{b_{ii}},\: i \in [n]$, in the real space~\cite{Seeger1999}.
As done for Gershgorin circle theorem~\cite{HornJohnson2013} for the classical eigenvalue problem in the complex space, we consider localizing the complementarity eigenvalues with respect to the points ${a_{ii}}/{b_{ii}}$ when the matrices $\A$ and $\B$ are not diagonal. The first localization set examines how far the matrices are to be diagonal, by regarding the off-diagonal entries row-by-row.


\begin{definition}\label{def:rpm}
For any matrix $\mathbf{M}\in\R^{n\times n}$, and row $i\in[n]$, the row sums of positive and negative off-diagonal entries and the associated diagonal shifts are denoted as follows:
\begin{equation*}\label{eq:rpm}
r_i^+(\mathbf{M}):=\sum_{j\ne i}\max\{m_{ij},0\},\quad 
r_i^-(\mathbf{M}):=-\sum_{j\ne i} \min\{m_{ij},0\}.
\end{equation*}
\begin{equation*}\label{eq:rpm2}
m_i ^+ := 
\, m_{ii} \, + \, r_i^+(\mathbf{M}),
\quad  m_i ^- :=\, m_{ii} \, - \, r_i^-(\mathbf{M}).
\end{equation*}    
\end{definition}

Note that $m_i ^+ > 0$ for matrices that are positive definite. However, the sign of $m_i ^-$ is not determined a priori for these classes of matrices.

\begin{theorem}[One-row localization]\label{thm:one-row-theorem}
Let $\A,\B\in\mathbb{S}_n$, and assume that $\B$ is positive definite and strictly diagonally dominant.
Let $\lambda \in \R$ be a complementarity eigenvalue of $(\A,\B)$, then
\[
\lambda\ \in\ 
\bigcup_{i\in [n]} \, \left[
\min\left(\frac{\rowm{a}{i}}{\rowm{b}{i}},\frac{\rowm{a}{i}}{\rowp{b}{i}}\right)\!,
\, \max\left(\frac{\rowp{a}{i}}{\rowm{b}{i}},\frac{\rowp{a}{i}}{\rowp{b}{i}}\right)
\right] \, 
=: \, {K}_1.
\]
\end{theorem}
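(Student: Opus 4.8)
The plan is to follow the classical Gershgorin strategy, adapted to the complementarity setting: select the row index realizing the largest eigenvector component, use the complementarity condition to convert that single row into an \emph{exact} scalar identity for $\lambda$, and then bound the resulting ratio using the boundedness of the components together with the positivity forced by strict diagonal dominance of $\B$.

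First I would fix a solution $(\x,\lambda)$ of the EiCP. Since $\x\ge\mathbf{0}$ and $\e^\top\x=1$, the vector is nonzero, so I may choose $i\in[n]$ with $x_i=\max_{j\in[n]}x_j>0$. Because $x_i>0$, the complementarity alternative ``$x_i=0$ or $w_i=0$'' (with $\mathbf{w}=(\A-\lambda\B)\x$) forces $w_i=0$, i.e.\ the $i$-th row equation $\sum_j a_{ij}x_j=\lambda\sum_j b_{ij}x_j$ holds exactly. Next, using $0\le x_j\le x_i$ for all $j$, I would bound numerator and denominator row sums by splitting the off-diagonal terms according to the sign of the entry and replacing each $x_j$ by either $0$ or $x_i$: this yields $\rowm{a}{i}\,x_i\le\sum_j a_{ij}x_j\le\rowp{a}{i}\,x_i$ and $\rowm{b}{i}\,x_i\le\sum_j b_{ij}x_j\le\rowp{b}{i}\,x_i$.

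Here the strict diagonal dominance of $\B$, combined with the positivity of its diagonal (which follows from $\B$ being PD), is essential: since $r_i^-(\B)\le\sum_{j\ne i}|b_{ij}|<b_{ii}$, both $\rowm{b}{i}=b_{ii}-r_i^-(\B)>0$ and $\rowp{b}{i}=b_{ii}+r_i^+(\B)>0$. Consequently the denominator $\sum_j b_{ij}x_j\ge\rowm{b}{i}\,x_i>0$ is strictly positive, so $\lambda=\bigl(\sum_j a_{ij}x_j\bigr)\big/\bigl(\sum_j b_{ij}x_j\bigr)$ is a genuine ratio. Writing $\sum_j b_{ij}x_j=\beta x_i$ with $\beta\in[\rowm{b}{i},\rowp{b}{i}]\subset(0,\infty)$, the factor $x_i$ cancels and the estimate reduces to a one-variable problem: bounding $c/\beta$ for $c\in\{\rowm{a}{i},\rowp{a}{i}\}$ over the positive interval $[\rowm{b}{i},\rowp{b}{i}]$. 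As $t\mapsto c/t$ is monotone on $(0,\infty)$ --- decreasing when $c\ge0$ and increasing when $c<0$ --- its extremes over the interval occur at the endpoints, giving $\lambda\le\max\bigl(\rowp{a}{i}/\rowm{b}{i},\,\rowp{a}{i}/\rowp{b}{i}\bigr)$ and $\lambda\ge\min\bigl(\rowm{a}{i}/\rowm{b}{i},\,\rowm{a}{i}/\rowp{b}{i}\bigr)$, placing $\lambda$ in the $i$-th interval and hence in ${K}_1$.

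The main obstacle --- and the reason the interval is stated with a min and a max over the two denominators rather than a single fraction --- is that neither the numerator bounds $\rowm{a}{i},\rowp{a}{i}$ nor the quantity $\sum_j a_{ij}x_j$ has a fixed sign, in contrast to the $\B$-quantities, which strict diagonal dominance pins down as strictly positive. Handling these sign cases cleanly through the monotonicity of $c/t$ is the crux of the argument; the remaining steps are the routine diagonal-dominance bookkeeping of the numerator and denominator bounds. Notably, this approach needs no copositivity or definiteness of $\A$, which is why the one-row set $K_1$ requires only the hypotheses on $\B$.
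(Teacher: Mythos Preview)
Your proposal is correct and follows essentially the same approach as the paper: pick the index $p$ of the largest component of $\x$, use complementarity to obtain $w_p=0$ and hence $\lambda=\A_p^\top\x/\B_p^\top\x$, sandwich both row sums via $0\le x_j\le x_p$, invoke strict diagonal dominance and positive definiteness of $\B$ to make the denominator bounds strictly positive, and then use the monotonicity of $v\mapsto v/u$ on $\R_+$ to obtain the min/max over the two endpoint denominators. The paper packages the sandwich step and the positivity of $b_i^\pm$ into a separate lemma, but the argument is otherwise identical.
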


To prove this theorem, we make use of the following result.

\begin{lemma}\label{lemma:utility}
Let $\mathbf{M}\in\mathbb{R}^{n\times n}$, $\x\in\mathbb{R}^n$ with $\x\ge\mathbf{0}$ and any $p\in\arg\max_{i\in[n]} x_i$, then:
\begin{itemize}
    \item[$(i)$] $\rowm{m}{p}\,x_p \ \le\ \mathbf{M}_p^\top \x \ \le\ \rowp{m}{p}\,x_p$
    \item[$(ii)$] if $\mathbf{M}$ is strictly diagonally dominant and $m_{ii}\ge 0$, then
$m_i^+ \ge m_i^- > 0.$
\end{itemize}
\end{lemma}

\begin{proof}
Since $x_p\ge x_j\ge 0$, for $j\ne p$,
\[
-\;r_p^-(\mathbf{M})\,x_p \leq \sum_{\substack{j\ne p \\
m_{pj} < 0}} m_{pj}x_j \le\ \sum_{j\ne p} m_{pj}x_j \leq  \sum_{\substack{j\ne p \\
m_{pj} > 0}} m_{pj}x_j \le\ r_p^+(\mathbf{M})\,x_p.
\]
Adding $m_{pp}x_p$ yields $(i)$.
If $\mathbf{M}$ is $\operatorname{SDD}$ and $m_{ii}\ge 0$, then
$m_{ii}=|m_{ii}|>\sum_{j\ne i}|m_{ij}|=r_i^+(\mathbf{M})+r_i^-(\mathbf{M})\ge r_i^-(\mathbf{M})\ge 0$, and $m_i^+ \ge m_i^-=m_{ii}-r_i^-(\mathbf{M})>0$.

\end{proof}

\begin{proof}[Proof of Theorem~\ref{thm:one-row-theorem}.]
Let $(\mathbf{x},\lambda)$ be a solution of $\operatorname{EiCP}(\A,\B)$. Then $\mathbf{x}\geq \mathbf 0$, and $\e^\top \x = 1$ and there exists $p$ with $x_p=\max_{i\in[n]} x_i>0$. With $\mathbf{w}:=(\mathbf{A}-\lambda\mathbf{B})\mathbf{x}\ge0$ and $\x^\top \mathbf{w}=0$, all terms $x_i w_i$, for $i\in [n]$, are nonnegative and sum to zero. Hence $x_p w_p=0$, therefore $w_p=0$ and $\lambda = {\A^\top_p\x}/{\B^\top_p\x}$.

Lemma \ref{lemma:utility} $(i)$ provides bounds to the operands:
$\rowm{a}{p}x_p \le \mathbf{A}_p^\top \x \le \rowp{a}{p}x_p$
 and $\rowm{b}{p}x_p \le \mathbf{B}_p^\top \x \le~\rowp{b}{p}x_p$, with the bounds on the denominator being both positive since $\B$ is $\operatorname{SDD}$ and $\B$ is PD then $b_{pp}=\mathbf{e}_p^\top \B \mathbf{e}_p >0$. Thus, $\rowm{a}{p}x_p/\mathbf{B}_p^\top \x \le \lambda \le \rowp{a}{p}x_p/\mathbf{B}_p^\top \x$. 

The map $u\rightarrow v/u$ is monotonic on $\R_+$, then $\rowm{a}{p}x_p/\mathbf{B}_p^\top \x\ge\min(\rowm{a}{p}x_p/\rowm{b}{p}x_p, \rowm{a}{p}x_p/\rowp{b}{p}x_p)=\min(\rowm{a}{p}/\rowm{b}{p}, \rowm{a}{p}/\rowp{b}{p})$, and, similarly, 
$\rowp{a}{p}x_p/\mathbf{B}_p^\top \x\le\max(\rowp{a}{p}/\rowm{b}{p}, \rowp{a}{p}/\rowp{b}{p})$ as $x_p\leq 1$. 

The localization set ${K}_1$ is expressed as the union of such intervals for every possible value of $p\in[n]$, all intervals being well defined since $b_i^+\ge b_i^-> 0$ for all $i\in[n]$.

\end{proof}


\begin{corollary}\label{cor:onecop}
Let $\A,\B\in\mathbb{S}_n$. Assume that $\B$ is positive definite and strictly diagonally dominant, and $\A$ is copositive. Let $\lambda \in \R$ be a complementarity eigenvalue of $(\A,\B)$, then
\[
\lambda\ \in\ 
\bigcup_{i\in [n]} \, \left[
\max\left(0, \ \frac{\rowm{a}{i}}{\rowp{b}{i}}\right)\!,
\, \frac{\rowp{a}{i}}{\rowm{b}{i}}\right] \, 
=: \, {K}'_1.
\]
\end{corollary}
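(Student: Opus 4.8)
The plan is to take the one-row interval from Theorem \ref{thm:one-row-theorem} and sharpen both endpoints using the two extra facts available under copositivity of $\A$: first, that $\lambda\ge 0$ (Property \ref{prop:nonnegla}), and second, that copositivity controls the signs of the quantities $a_i^+$ and $a_i^-$ well enough to collapse the four-way $\min$/$\max$ into the stated two-term form. Concretely, I would fix $i\in[n]$ and analyze the endpoints of the generic interval
\[
\left[\min\!\left(\frac{a_i^-}{b_i^-},\frac{a_i^-}{b_i^+}\right),\ \max\!\left(\frac{a_i^+}{b_i^-},\frac{a_i^+}{b_i^+}\right)\right],
\]
recalling from Lemma \ref{lemma:utility}$(ii)$ (applicable to $\B$) that $b_i^+\ge b_i^->0$, so the denominators are positive and ordered.

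First I would handle the upper endpoint. Since $\A$ is copositive, testing against $\mathbf{e}_i\ge\mathbf{0}$ gives $a_{ii}=\mathbf{e}_i^\top\A\,\mathbf{e}_i\ge 0$, and hence $a_i^+=a_{ii}+r_i^+(\A)\ge 0$. With $a_i^+\ge 0$ and $0<b_i^-\le b_i^+$, the map $u\mapsto a_i^+/u$ is nonincreasing in $u$ on $\R_+$, so the larger of $a_i^+/b_i^-$ and $a_i^+/b_i^+$ is the one with the smaller denominator, namely $a_i^+/b_i^-$. This replaces the $\max$ by the single term $a_i^+/b_i^-$, matching the stated right endpoint.

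Next the lower endpoint. The sign of $a_i^-=a_{ii}-r_i^-(\A)$ is not fixed, so I split into cases. If $a_i^-\ge 0$, then $a_i^-/b_i^+\ge 0$ and, since $0<b_i^-\le b_i^+$, we have $a_i^-/b_i^+\le a_i^-/b_i^-$, so $\min(a_i^-/b_i^-,a_i^-/b_i^+)=a_i^-/b_i^+\ge 0$; thus $\max(0,a_i^-/b_i^+)=a_i^-/b_i^+$ equals the original lower endpoint. If instead $a_i^-<0$, the original lower endpoint is negative, but Property \ref{prop:nonnegla} guarantees $\lambda\ge 0$, so intersecting the interval with $[0,\infty)$ lets me raise the lower endpoint to $0$; and in this case $a_i^-/b_i^+<0$, so $\max(0,a_i^-/b_i^+)=0$ again matches. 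In both cases the lower endpoint becomes $\max(0,a_i^-/b_i^+)$, as claimed. Taking the union over $i\in[n]$ and using $\lambda\ge 0$ once more to discard any negative portion yields $K_1'$.

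I expect the only real subtlety to be the lower-endpoint case split: one must be careful that when $a_i^-<0$ the genuine contribution to the localization comes not from the raw Theorem \ref{thm:one-row-theorem} bound but from the global nonnegativity of $\lambda$, so the two-term $\max$ is really encoding an intersection with $\R_+$ rather than a pure monotonicity argument. Everything else is the same monotonicity of $u\mapsto v/u$ on $\R_+$ already used in the proof of Theorem \ref{thm:one-row-theorem}, now with the sign of the numerator pinned down by copositivity.
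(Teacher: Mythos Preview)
Your proposal is correct and follows essentially the same argument as the paper: simplify the upper endpoint using $a_i^+\ge 0$ (from $a_{ii}\ge 0$ by copositivity) together with $b_i^+\ge b_i^->0$, and handle the lower endpoint by a sign split on $a_i^-$ combined with the nonnegativity $\lambda\ge 0$ from Property~\ref{prop:nonnegla}. If anything, you are slightly more explicit than the paper in justifying $a_{ii}\ge 0$ via $\mathbf{e}_i^\top\A\,\mathbf{e}_i$ and in spelling out that the new lower endpoint is exactly the intersection of the original interval with $[0,\infty)$.
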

\begin{proof}
The copositivity of the matrix $\A$ allows refining the localization set $K_1$ by considering the following properties.
\begin{enumerate}[(i)]
    \item Property~\ref{prop:nonnegla} establishes a trivial lower bound  of zero for $\lambda$. The lower bounds of each interval in $K_1$ depend on the sign of $a_i^-$. With a negative  $a_i^-$, the corresponding lower bound is also negative and thus dominated by zero. On the contrary, a tighter lower bound can be obtained when $a_i^- >0$. Since the copositivity of the matrix alone does not induce a sign on $a_i^-$, we redefine the lower bound of each interval in $K'_1$.
    \item Since $a_i^+ \geq 0$ and $b_i^+\geq b_i^- >0$ the upper bound of each interval in $K_1$ is simply $a_i^+/b_i^-$.    
\end{enumerate}
\end{proof}


We now introduce an alternative localization set, which considers pairs of rows $i,j \in [n]$ of the matrices $\A$ and $\B$. As before, we compute quantities based on the off-diagonal entries. Moreover, to derive this new localization set, we make the additional assumption that $\A$ is copositive, then by Property~\ref{prop:nonnegla} one has $\lambda \geq 0$. Note that the copositivity hypothesis of $\A$ is not restrictive since it can be enforced by a shift, as shown in Property~\ref{pro:1}.

We begin with the following definitions. 


\begin{definition}\label{def:bij1}
For any $\mathbf{M} \in \R^{n\times n}$, and two rows $i,j\in[n]$, let us denote: 
\begin{align*}
{m}_{ij} ^+ \, &:= \, m_{ii}m_{jj}-r_i^+(\mathbf{M})r_j^+(\mathbf{M})\\
{m}_{ij} ^- \, &:= \, m_{ii}m_{jj}-r_i^-(\mathbf{M})r_j^-(\mathbf{M}).
\end{align*}  
\end{definition}

\begin{definition}\label{def:bij}
For any pair $\A , \B \in \R^{n\times n}$ and two rows $i,j\in[n]$, let us denote:
\begin{gather*}
s_{ij}^+({\A}, \B) \, := \, a_{ii}b_{jj}+a_{jj}b_{ii}+r_i^+(\A)r_j^-(\B)+r_j^+(\A)r_i^-(\B) \\
s_{ij}^-({\A}, \B) \, := \, a_{ii}b_{jj}+a_{jj}b_{ii}+r_i^-(\A)r_j^+(\B)+r_j^-(\A)r_i^+(\B) \\
P^{\, \operatorname{up}}_{ij}(y) \;
:=\rowmm{b}{i}{j}\,y^2
-s_{ij}^+({\A}, \B)\,y
+\rowpp{a}{i}{j}, \quad P^{\, \operatorname{low}}_{ij}(y)
:=\rowpp{b}{i}{j}\,y^2
-s_{ij}^-({\A}, \B)\,y
+\rowmm{a}{i}{j}\quad\forall y\in{\mathbb{R}}.
\end{gather*}  
\end{definition}

\begin{theorem}[Two-row localization]\label{thm:two-row-theorem}
Let $\mathbf A,\mathbf B\in\mathbb S_n$. Assume that $\B$ is positive definite and strictly diagonally dominant, and $\A$ is copositive. Let $\lambda \in \R$ be a complementarity eigenvalue of $(\A,\B)$, then
$$\ \lambda\in  
 \bigcup_{\substack{i,j\in[n]\\ i\ne j}}\,\left[\,\max\{0, \, \min\{y \in \R \mid P^{\, \operatorname{low}}_{ij}(y) =0\}\},\ \max\{y \in \R \mid P^{\, \operatorname{up}}_{ij}(y) =0\}\right] 
 =: {K}_2.$$
\end{theorem}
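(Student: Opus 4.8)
The plan is to mirror the proof of Theorem~\ref{thm:one-row-theorem} but to extract information from the two largest components of the eigenvector simultaneously, so that the two resulting generalized-eigenvalue equations combine into a quadratic (rather than a linear) constraint on $\lambda$. Let $(\x,\lambda)$ solve $\operatorname{EiCP}(\A,\B)$; by Property~\ref{prop:nonnegla} we have $\lambda\ge0$, and $a_{ii}=\e_i^\top\A\,\e_i\ge0$ for every $i$ by copositivity. Pick $p\in\arg\max_i x_i$ and $q\in\arg\max_{i\ne p}x_i$, so that $x_p\ge x_q\ge x_k$ for all $k\notin\{p,q\}$. If $x_q=0$ then $\x=\e_p$ and $\lambda=a_{pp}/b_{pp}$, a case I will dispose of by the endpoint computation at the very end. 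Otherwise $x_p\ge x_q>0$, and complementarity forces $w_p=w_q=0$, i.e. $\lambda\,\B_p^\top\x=\A_p^\top\x$ and $\lambda\,\B_q^\top\x=\A_q^\top\x$.

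First I would record four one-sided, two-index refinements of Lemma~\ref{lemma:utility}$(i)$: bounding the off-diagonal mass of row $p$ by $x_q$ and that of row $q$ by $x_p$ yields
\[
\A_p^\top\x\le a_{pp}x_p+r_p^+(\A)\,x_q,\qquad \B_p^\top\x\ge b_{pp}x_p-r_p^-(\B)\,x_q,
\]
together with the analogous pair for row $q$ (with the roles of $x_p,x_q$ exchanged), and two ``lower'' companions obtained by swapping the superscripts $+\!\leftrightarrow\!-$ and reversing each inequality. Substituting into $\lambda\,\B_p^\top\x=\A_p^\top\x$, using $\lambda\ge0$ and $b_{ii}>r_i^-(\B)\ge0$ (from strict diagonal dominance, so the $\B$-bounds stay positive), I obtain the linear-in-$\lambda$ inequalities
\[
(\lambda b_{pp}-a_{pp})\,x_p\le(\lambda r_p^-(\B)+r_p^+(\A))\,x_q,\qquad
(\lambda b_{qq}-a_{qq})\,x_q\le(\lambda r_q^-(\B)+r_q^+(\A))\,x_p,
\]
for the upper bound, and the corresponding pair with $(a_{ii}-\lambda b_{ii})$ on the left and $(\lambda r_i^+(\B)+r_i^-(\A))$ on the right for the lower bound.

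Next I would combine each pair into a quadratic. Setting $L_p(y)=y\,b_{pp}-a_{pp}$, $L_q(y)=y\,b_{qq}-a_{qq}$, $R_p(y)=y\,r_p^-(\B)+r_p^+(\A)$ and $R_q(y)=y\,r_q^-(\B)+r_q^+(\A)$, a direct expansion identifies $P^{\,\operatorname{up}}_{pq}(y)=L_p(y)L_q(y)-R_p(y)R_q(y)$, and an analogous factorization holds for $P^{\,\operatorname{low}}_{pq}$. If $L_p(\lambda),L_q(\lambda)\ge0$, then multiplying the two displayed inequalities (all factors and $x_p,x_q$ nonnegative) and cancelling $x_px_q>0$ gives $L_p(\lambda)L_q(\lambda)\le R_p(\lambda)R_q(\lambda)$, i.e. $P^{\,\operatorname{up}}_{pq}(\lambda)\le0$; if exactly one of $L_p(\lambda),L_q(\lambda)$ is negative, then $L_p(\lambda)L_q(\lambda)\le0\le R_p(\lambda)R_q(\lambda)$ delivers the same conclusion for free. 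Since the leading coefficient $b_{pq}^-=b_{pp}b_{qq}-r_p^-(\B)r_q^-(\B)>0$, the parabola $P^{\,\operatorname{up}}_{pq}$ opens upward, so $P^{\,\operatorname{up}}_{pq}(\lambda)\le0$ places $\lambda$ between its two real roots, whence $\lambda\le\max\{y:P^{\,\operatorname{up}}_{pq}(y)=0\}$. The symmetric treatment of the lower inequalities gives $\lambda\ge\min\{y:P^{\,\operatorname{low}}_{pq}(y)=0\}$, and with $\lambda\ge0$ this is exactly membership in the $(p,q)$-interval of $K_2$.

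The hard part is the remaining sign case $L_p(\lambda)<0$ and $L_q(\lambda)<0$ (and its mirror for the lower bound), where $L_p(\lambda)L_q(\lambda)>0$ breaks the product argument. Here $\lambda<\min\{a_{pp}/b_{pp},a_{qq}/b_{qq}\}=:y_0$, and instead of using $\lambda$ I would evaluate $P^{\,\operatorname{up}}_{pq}$ at $y_0\ge0$: the factor $L$ belonging to the attaining index vanishes, so $P^{\,\operatorname{up}}_{pq}(y_0)=-R_p(y_0)R_q(y_0)\le0$, since $y_0\ge0$ forces $R_p(y_0),R_q(y_0)\ge0$. This simultaneously certifies that $P^{\,\operatorname{up}}_{pq}$ has real roots and that $y_0\le\max\{y:P^{\,\operatorname{up}}_{pq}(y)=0\}$; as $\lambda<y_0$, the upper bound follows. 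Evaluating $P^{\,\operatorname{low}}_{pq}$ at $\max\{a_{pp}/b_{pp},a_{qq}/b_{qq}\}$ handles the mirror case for the lower bound, and this same endpoint computation at $a_{pp}/b_{pp}$ settles the degenerate case $\x=\e_p$ for any partner $q\ne p$. The only genuine obstacle is the bookkeeping: keeping the directions of the four inequalities and the three sign regimes straight; the rest is the routine expansion matching $L_pL_q-R_pR_q$ to $P^{\,\operatorname{up}}_{pq}$ and $P^{\,\operatorname{low}}_{pq}$ together with the sign facts $b_{ii}>r_i^-(\B)$, $a_{ii}\ge0$, and $\lambda\ge0$.
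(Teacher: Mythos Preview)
Your proposal is correct and follows essentially the same route as the paper: select the two largest components $x_p\ge x_q$, use complementarity at rows $p$ and $q$ to get two linear inequalities in $\lambda$, combine them into $P^{\,\operatorname{up}}_{pq}(\lambda)\le 0$ (resp.\ $P^{\,\operatorname{low}}_{pq}(\lambda)\le 0$) when the diagonal factors have the right sign, and fall back on evaluating the polynomial at $a_{ii}/b_{ii}$ in the remaining sign regimes and in the degenerate case $\x=\e_p$. The only cosmetic differences are that the paper states the row inequalities with $x_i^2$ and $x_px_q$ (your version is theirs divided by $x_i$) and groups the sign cases as ``both positive'' versus ``at least one nonpositive'', whereas you separate out the mixed-sign case and observe it is immediate from $L_pL_q\le 0\le R_pR_q$.
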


The proof of Theorem~\ref{thm:two-row-theorem} makes use of the following results.

\begin{lemma}\label{lem:two-rows-bounds}
Let $\mathbf{M} \in \mathbb{S}_n$ and $\mathbf{x}\in\mathbb{R}^n$ with $\mathbf{x}\ge \mathbf{0}$ and $\e^\top \x = 1$, and the two largest elements
\[
p\in \argmax_{i\in[n]} x_i
\: \mbox{ and }
q\in\argmax_{i\in[n]\setminus\{p\}} x_i \,
\]
then the following bounds hold for $i=p$ and for $i=q$:
\begin{equation*}
    -\,r_i^-(\mathbf{M})\,x_p x_q \;\le\; \sum_{j\ne i} m_{ij}\,x_i x_j \;\le\; r_i^+(\mathbf{M})\,x_p x_q, 
\end{equation*}
\end{lemma}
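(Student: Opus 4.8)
The plan is to handle the two indices $i=p$ and $i=q$ by the same mechanism: after factoring
\[
\sum_{j\ne i} m_{ij}\,x_i x_j \;=\; x_i \sum_{j\ne i} m_{ij}\,x_j,
\]
it suffices to bound the inner sum $\sum_{j\ne i} m_{ij}\,x_j$ and then multiply by the nonnegative outer factor $x_i$. The entire argument rests on the ordering $x_p \ge x_q \ge x_j$ for every $j\notin\{p,q\}$, which lets me replace each $x_j$ by a single companion factor and, after multiplying by $x_i$, recover the common product $x_p x_q$ in both cases.

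First I would split the inner sum by the sign of the off-diagonal entry, writing $\sum_{j\ne i} m_{ij}\,x_j = \sum_{j\ne i,\,m_{ij}>0} m_{ij}\,x_j + \sum_{j\ne i,\,m_{ij}<0} m_{ij}\,x_j$. Since $\x\ge\mathbf{0}$, dropping the negative-entry terms gives an upper bound and dropping the positive-entry terms gives a lower bound, exactly as in the proof of Lemma~\ref{lemma:utility}$(i)$.

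For $i=p$, every index $j\ne p$ satisfies $x_j\le x_q$ (because $q\in\argmax_{i\in[n]\setminus\{p\}}x_i$). Hence in the positive part I bound $m_{pj}x_j\le m_{pj}x_q$ to obtain $\sum_{j\ne p} m_{pj}\,x_j \le r_p^+(\mathbf{M})\,x_q$, while in the negative part multiplying $x_j\le x_q$ by $m_{pj}<0$ reverses the inequality to $m_{pj}x_j\ge m_{pj}x_q$, yielding $\sum_{j\ne p} m_{pj}\,x_j \ge -r_p^-(\mathbf{M})\,x_q$; multiplying through by $x_p\ge 0$ gives the claim for $i=p$. For $i=q$ the companion factor changes: now every $j\ne q$ obeys $x_j\le x_p$, so the identical two steps give $-r_q^-(\mathbf{M})\,x_p \le \sum_{j\ne q} m_{qj}\,x_j \le r_q^+(\mathbf{M})\,x_p$, and multiplying by $x_q\ge 0$ again produces $x_p x_q$.

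The only point requiring care, which I would call the main (minor) obstacle, is matching each row to the correct companion factor: when the outer factor is $x_p$ the inner entries are controlled by $x_q$, whereas when the outer factor is $x_q$ the inner entries are controlled by $x_p$, and it is precisely this crossing that makes both bounds collapse to the symmetric quantity $x_p x_q$. No further properties of $\mathbf{M}$ (neither symmetry nor definiteness) are used.
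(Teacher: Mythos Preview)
Your proof is correct and follows essentially the same approach as the paper's: both split the off-diagonal sum by the sign of $m_{ij}$ and use the ordering $x_p\ge x_q\ge x_j$ (for $j\notin\{p,q\}$) to replace each $x_j$ by the appropriate companion factor, yielding $x_p x_q$ in both cases. The only cosmetic difference is that you factor out $x_i$ before bounding the inner sum, whereas the paper bounds the products $m_{ij}x_ix_j$ directly.
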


\begin{proof}
Because $\mathbf{x}\ge \mathbf 0$ and $x_p\ge x_q\ge x_j$ for $j\notin\{p,q\}$, we have for the $p$-row
\[
\sum_{j\ne p} m_{pj}\,x_p x_j
=
\sum_{\substack{j\ne p\\ m_{pj}> 0}} m_{pj}\,x_p x_j
+\sum_{\substack{j\ne p\\ m_{pj}< 0}} m_{pj}\,x_p x_j
\;\le\;
\sum_{\substack{j\ne p\\ m_{pj}> 0}} m_{pj}\,x_p x_q
= r_p^+(\mathbf{M})\,x_p x_q,
\]
and, symmetrically,
\[
\sum_{j\ne p} m_{pj}\,x_p x_j
\;\ge\;
\sum_{\substack{j\ne p\\ m_{pj}< 0}} m_{pj}\,x_p x_j
\;\ge\;
-\sum_{\substack{j\ne p\\ m_{pj}< 0}} |m_{pj}|\,x_p x_q
= -\,r_p^-(\mathbf{M})\,x_p x_q.
\]
For the case $i=q$, we use that $x_j\le x_p$ for all $j\ne q$, then
\[
\sum_{j\ne q} m_{qj}\,x_q x_j
\le
\sum_{\substack{j\ne q\\ m_{qj}> 0}} m_{qj}\,x_q x_p
= r_q^+(\mathbf{M})\,x_p x_q,
\quad
\sum_{j\ne q} m_{qj}\,x_q x_j
\ge
\sum_{\substack{j\ne q\\ m_{qj}< 0}} m_{qj}\,x_q x_p=
-\,r_q^-(\mathbf{M})\,x_p x_q.
\]
\end{proof}

\begin{lemma}\label{lem:pol}
The quadratic functions $P^{\, \operatorname{low}}_{ij}$ and $P^{\, \operatorname{up}}_{ij}$ presented in Definition~\ref{def:bij} can  also be written as follows, for all $y\in\mathbb{R}$,
\begin{align}
P^{\, \operatorname{up}}_{ij}(y) &=
(y b_{ii}-a_{ii})(yb_{jj}-a_{jj})- (r_i^{+}(\mathbf{A})+y\,r_i^{-}(\mathbf{B})) \, (r_j^{+}(\mathbf{A})+y\,r_j^{-}(\mathbf{B})), \label{eq:puplong}\\
P^{\, \operatorname{low}}_{ij}(y) &=
(y b_{ii}-a_{ii})(y b_{jj}-a_{jj})- (r_i^{-}(\mathbf{A})+y\,r_i^{+}(\mathbf{B})) \, (r_j^{-}(\mathbf{A})+y\,r_j^{+}(\mathbf{B})). \label{eq:plowlong}
\end{align}
\end{lemma}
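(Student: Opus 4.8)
The plan is to verify Lemma~\ref{lem:pol} by direct algebraic expansion, showing that the factored forms \eqref{eq:puplong} and \eqref{eq:plowlong} coincide with the defining expressions for $P^{\, \operatorname{up}}_{ij}$ and $P^{\, \operatorname{low}}_{ij}$ in Definition~\ref{def:bij}. Since both target expressions are quadratics in $y$, it suffices to expand each factored form and match the coefficients of $y^2$, $y^1$, and $y^0$ against the stated polynomials. I will carry out the computation for $P^{\, \operatorname{up}}_{ij}$ in detail and note that $P^{\, \operatorname{low}}_{ij}$ follows by the same bookkeeping with the roles of the $+$ and $-$ superscripts interchanged appropriately.

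\emph{Coefficient of $y^2$.} Expanding the first product in \eqref{eq:puplong} contributes $b_{ii}b_{jj}$, while the second product contributes $-\,r_i^{-}(\mathbf{B})\,r_j^{-}(\mathbf{B})$ to the $y^2$ term. Their sum is $b_{ii}b_{jj}-r_i^{-}(\mathbf{B})\,r_j^{-}(\mathbf{B})$, which is exactly $\rowmm{b}{i}{j}$ by Definition~\ref{def:bij1}, matching the leading coefficient in $P^{\, \operatorname{up}}_{ij}$.

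\emph{Constant term.} Setting $y=0$, the first product gives $(-a_{ii})(-a_{jj})=a_{ii}a_{jj}$ and the second gives $-\,r_i^{+}(\mathbf{A})\,r_j^{+}(\mathbf{A})$, so the constant is $a_{ii}a_{jj}-r_i^{+}(\mathbf{A})\,r_j^{+}(\mathbf{A})=\rowpp{a}{i}{j}$, again as required. \emph{Coefficient of $y^1$.} From the first product the cross terms are $-a_{ii}b_{jj}-a_{jj}b_{ii}$; from the second product the cross terms are $-\bigl(r_i^{+}(\mathbf{A})\,r_j^{-}(\mathbf{B})+r_j^{+}(\mathbf{A})\,r_i^{-}(\mathbf{B})\bigr)$. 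Collecting these, the linear coefficient equals $-\bigl(a_{ii}b_{jj}+a_{jj}b_{ii}+r_i^{+}(\mathbf{A})\,r_j^{-}(\mathbf{B})+r_j^{+}(\mathbf{A})\,r_i^{-}(\mathbf{B})\bigr)=-\,s_{ij}^{+}(\A,\B)$ by the definition of $s_{ij}^{+}$, which is precisely the linear coefficient of $P^{\, \operatorname{up}}_{ij}$. The identity \eqref{eq:plowlong} is obtained identically, replacing $r^{+}(\mathbf{A})$ by $r^{-}(\mathbf{A})$ and $r^{-}(\mathbf{B})$ by $r^{+}(\mathbf{B})$ throughout, which turns $\rowmm{b}{i}{j}$ into $\rowpp{b}{i}{j}$, $\rowpp{a}{i}{j}$ into $\rowmm{a}{i}{j}$, and $s_{ij}^{+}$ into $s_{ij}^{-}$.

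There is essentially no genuine obstacle here; the result is a purely formal rewriting and the only thing to guard against is a sign error in the cross terms of the two products, particularly in correctly tracking which $r$-quantities carry a $+$ versus a $-$ superscript when passing between the two identities. Care is needed to confirm that the mixed term $r_i^{+}(\mathbf{A})\,r_j^{-}(\mathbf{B})+r_j^{+}(\mathbf{A})\,r_i^{-}(\mathbf{B})$ assembles exactly into $s_{ij}^{+}$ rather than a symmetrized variant with mismatched indices, but the symmetry of the construction in $i$ and $j$ makes this transparent once the expansion is written out.
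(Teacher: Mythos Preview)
Your proof is correct and follows essentially the same approach as the paper: both expand \eqref{eq:puplong} and identify the $y^2$, $y^1$, and $y^0$ coefficients with $\rowmm{b}{i}{j}$, $-s_{ij}^{+}(\A,\B)$, and $\rowpp{a}{i}{j}$ respectively, then observe that \eqref{eq:plowlong} follows by swapping the $+$ and $-$ superscripts on the $r$-quantities.
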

\begin{proof}
The definition of $P^{\, \operatorname{up}}_{i j}(y)$ can be retrieved by expanding the expression~\eqref{eq:puplong}:
\[
\begin{aligned}
&\bigl(b_{ii}b_{jj}-r_i^-(\mathbf B)r_j^-(\mathbf B)\bigr)\,y^2
+\bigl(a_{ii}a_{jj}-r_i^+(\mathbf A)r_j^+(\mathbf A)\bigr)\\
&\quad-\Bigl(a_{ii}b_{jj}+a_{jj}b_{ii}+r_i^+(\mathbf A)r_j^-(\mathbf B)+r_j^+(\mathbf A)r_i^-(\mathbf B)\Bigr)\,y\\
&=\rowmm{b}{i}{j}\,y^2
-s_{ij}^+({\A}, \B)\,y
+\rowpp{a}{i}{j}
= P^{\, \operatorname{up}}_{i j}(y)
\end{aligned} 
\]

Note that, due to symmetry in Definition~\ref{def:bij}, $P^{\, \operatorname{low}}_{i j}(y)$ in~\eqref{eq:plowlong} is obtained from $P^{\, \operatorname{up}}_{i j}(y)$ in~\eqref{eq:puplong} only by replacing the terms $r^+$ with $r^-$ and vice versa.

\end{proof}

\begin{lemma}\label{lem:root}
    Under the hypotheses of Theorem~\ref{thm:two-row-theorem}, the quadratic functions $P^{\, \operatorname{low}}_{ij}$ and $P^{\, \operatorname{up}}_{ij}$ are strictly convex and have real roots. As a consequence, 
    for any $y\in\R$, if $P^{\, \operatorname{up}}_{ij}(y)\le 0$,
    then $y\le \max\{y \in \R \mid P^{\, \operatorname{up}}_{ij}(y) =0\}$, or  if 
    $P^{\, \operatorname{low}}_{ij}(y)\le 0$,
    then $y\ge \min\{y \in \R \mid P^{\, \operatorname{low}}_{ij}(y) =0\}$.
\end{lemma}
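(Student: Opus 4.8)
The plan is to establish two structural facts about $P^{\operatorname{low}}_{ij}$ and $P^{\operatorname{up}}_{ij}$ — strict convexity and the existence of real roots — and then deduce the stated implications as immediate consequences of elementary properties of upward-opening parabolas. For strict convexity, I would read off the leading coefficients from Definition~\ref{def:bij}: the coefficient of $y^2$ in $P^{\operatorname{up}}_{ij}$ is $b^-_{ij} = b_{ii}b_{jj} - r_i^-(\B)r_j^-(\B)$, and in $P^{\operatorname{low}}_{ij}$ it is $b^+_{ij} = b_{ii}b_{jj} - r_i^+(\B)r_j^+(\B)$. Both must be shown strictly positive. Since $\B$ is $\operatorname{SDD}$ with positive diagonal (being PD), Lemma~\ref{lemma:utility}$(ii)$ gives $b_{ii} > r_i^+(\B) + r_i^-(\B) \ge \max\{r_i^+(\B), r_i^-(\B)\} \ge 0$ for each $i$, and likewise for $j$. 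Hence $b_{ii}b_{jj} > r_i^+(\B)r_j^+(\B)$ and $b_{ii}b_{jj} > r_i^-(\B)r_j^-(\B)$, so both leading coefficients are positive and the two quadratics are strictly convex.

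Next I would settle the existence of real roots. Because each quadratic opens upward, having a real root is equivalent to its minimum value being nonpositive, which I would verify by exhibiting a single point where the polynomial is $\le 0$. The natural candidate is $y = \lambda$ itself, but since the lemma is stated for \emph{all} $y$ independently of any particular eigenvalue, I instead exhibit a structurally convenient point. Using the factored forms from Lemma~\ref{lem:pol}, evaluate at a ratio such as $y_0 = a_{ii}/b_{ii}$ (assuming $b_{ii}>0$), which kills the factor $(yb_{ii}-a_{ii})$ in the first product; the remaining value is $-\,(r_i^{+}(\A)+y_0 r_i^{-}(\B))(r_j^{+}(\A)+y_0 r_j^{-}(\B))$ for $P^{\operatorname{up}}_{ij}$. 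Here I would invoke copositivity of $\A$ (the standing hypothesis of Theorem~\ref{thm:two-row-theorem}) to argue that $a_{ii}\ge 0$, so $y_0 \ge 0$, whence both $r^+(\A)+y_0 r^-(\B)$ terms are nonnegative and the product is $\le 0$. The same computation with $y_0 = a_{jj}/b_{jj}$ and the $r^+\leftrightarrow r^-$ swap handles $P^{\operatorname{low}}_{ij}$. This yields a point where each quadratic is nonpositive, forcing the discriminant to be nonnegative and the roots to be real.

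Finally, the two implications follow from the geometry of a strictly convex parabola with real roots: such a function is $\le 0$ exactly on the closed interval between its two roots. Thus $P^{\operatorname{up}}_{ij}(y)\le 0$ forces $y$ to lie between the roots, so in particular $y \le \max\{y : P^{\operatorname{up}}_{ij}(y)=0\}$; symmetrically $P^{\operatorname{low}}_{ij}(y)\le 0$ forces $y \ge \min\{y : P^{\operatorname{low}}_{ij}(y)=0\}$. These are one-line deductions once convexity and real-rootedness are in hand.

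The main obstacle I anticipate is the root-existence step, specifically justifying that the minimum of each parabola is nonpositive in a way that is valid for the lemma as stated (\emph{unconditionally in $y$}, not merely at the eigenvalue). The clean route is the evaluation-at-$a_{ii}/b_{ii}$ trick above, but it relies on $a_{ii}\ge 0$; I would confirm that copositivity of $\A$ indeed gives nonnegative diagonal entries (test with $\x = \e_i$, since $\e_i^\top\A\,\e_i = a_{ii}\ge 0$). If one wanted to avoid copositivity here, an alternative is to verify directly that the discriminant $s_{ij}^{\pm}(\A,\B)^2 - 4\,b^{\mp}_{ij}\,a^{\pm}_{ij} \ge 0$, but that is a messier algebraic inequality and I would prefer the factored-form evaluation, which makes the sign transparent.
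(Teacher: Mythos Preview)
Your proposal is correct, and for the real-root step it takes a genuinely different route from the paper. Both approaches share the same opening (strict convexity via Lemma~\ref{lemma:utility}$(ii)$, which gives $b_{ii}>r_i^\pm(\B)\ge 0$ and hence $b_{ij}^\pm>0$) and the same closing (the sublevel set $\{y:P(y)\le 0\}$ of an upward-opening parabola is the interval between its roots). The divergence is in how real roots are certified: the paper computes the discriminants $\Delta_{ij}^{\operatorname{up}}$ and $\Delta_{ij}^{\operatorname{low}}$ explicitly and bounds them below by a square via two applications of the AM--GM inequality (to $a_{ii}b_{jj}+a_{jj}b_{ii}$ and to the cross term in $s_{ij}^\pm$). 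You instead evaluate the factored form from Lemma~\ref{lem:pol} at $y_0=a_{ii}/b_{ii}$, killing the first product and leaving a nonpositive value because $y_0\ge 0$ under copositivity. Your argument is shorter and more transparent; indeed the paper itself uses exactly this evaluation later, in Case~1 of the proof of Theorem~\ref{thm:two-row-theorem} and in the Remark that follows, so you have effectively noticed that this computation can be promoted to simplify the lemma. The paper's discriminant route, in exchange, yields the explicit lower bound $\Delta_{ij}^{\operatorname{low}}\ge 4\bigl(\sqrt{a_{ii}a_{jj}r_i^+(\B)r_j^+(\B)}+\sqrt{b_{ii}b_{jj}r_i^-(\A)r_j^-(\A)}\bigr)^2$, which is not needed for the lemma but is informative. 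One minor point: the same test point $a_{ii}/b_{ii}$ works for \emph{both} $P^{\operatorname{up}}_{ij}$ and $P^{\operatorname{low}}_{ij}$; you need not switch to $a_{jj}/b_{jj}$ for the latter.
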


\begin{proof}
According to Lemma \ref{lemma:utility} $(ii)$, the leading coefficients $\rowmm{b}{i}{j}$ and $\rowpp{b}{i}{j}$ of the quadratic polynomials 
are positive, thus $P^{\, \operatorname{low}}_{ij}$ and $P^{\, \operatorname{up}}_{ij}$ are strictly convex. For a strictly convex quadratic polynomial $P$ with two real roots (not necessarily distinct), the interval in-between is precisely $\{y\in\R: P(y)\le 0\}$. 
Thus, it remains to show that $P^{\, \operatorname{low}}_{ij}$ and $P^{\, \operatorname{up}}_{ij}$ have nonnegative discriminants $\Delta^{\, \operatorname{low}}_{ij}$ and $\Delta^{\, \operatorname{up}}_{ij}$.

Since $\A$ is copositive and $\B$ is $\operatorname{PD}$, their diagonal terms are nonnegative, so are the one-row sums $r^+$ and $r^-$ from Definition~\ref{def:rpm}, hence:
\[
a_{ii}b_{jj}+a_{jj}b_{ii}\ \ge\ 2\sqrt{a_{ii}a_{jj}b_{ii}b_{jj}},\quad
r_i^-(\A)r_j^+(\B)+r_j^-(\A)r_i^+(\B)\ \ge\ 2\sqrt{r_i^-(\A)r_j^-(\A)\,r_i^+(\B)r_j^+(\B)}.
\]
By summing and squaring each side, we get
\[
s_{ij}^-(\A,\B)^2
\ \ge\ \bigl(2\sqrt{a_{ii}a_{jj}b_{ii}b_{jj}}+2\sqrt{r_i^-(\A)r_j^-(\A)\,r_i^+(\B)r_j^+(\B)}\bigr)^2.
\]
Subtracting $4\, b_{ij}^+\,\rowmm{a}{i}{j}
=4\bigl(b_{ii}b_{jj}-r_i^+(\B)r_j^+(\B)\bigr)\bigl(a_{ii}a_{jj}-r_i^-(\A)r_j^-(\A)\bigr)$ yields
\[
\Delta^{\,\operatorname{low}} _{ij}\ := s_{ij}^-(\A,\B)^2- \, 4\, b_{ij}^+\,\rowmm{a}{i}{j} \ \ge\ 4\Bigl(\sqrt{a_{ii}a_{jj}\,r_i^+(\B)r_j^+(\B)}+\sqrt{b_{ii}b_{jj}\,r_i^-(\A)r_j^-(\A)}\Bigr)^2\ \ge\ 0.
\]
The proof is symmetric for $\Delta^{\, \operatorname{up}}_{ij}$
by inverting $r^+$ and $r^-$ as observed in Lemma~\ref{lem:pol}, i.e.,
\[
\Delta^{\,\operatorname{up}} _{ij}\ := s_{ij}^+(\A,\B)^2- \, 4\, b_{ij}^-\,\rowpp{a}{i}{j} \ \ge\ 4\Bigl(\sqrt{a_{ii}a_{jj}\,r_i^-(\B)r_j^-(\B)}+\sqrt{b_{ii}b_{jj}\,r_i^+(\A)r_j^+(\A)}\Bigr)^2\ \ge\ 0.
\]

\end{proof}



\begin{proof}[Proof of Theorem~\ref{thm:two-row-theorem}]
Let $(\x,\lambda)$ be a complementarity eigenpair of $(\A,\B)$. The proof is split into two cases, depending on whether the vector $\x$ has one nonzero value (i.e., $\x=\e_p$ for some index $p\in[n]$) or at least two.\\


\noindent\textbf{Case 1: $\x=\mathbf{e}_p$, $p\in[n]$.}
Due to the complementarity constraints, the $p$-row of \eqref{eq:EiCP_a} reads as $\lambda b_{pp}-a_{pp}=0$, that is, $\lambda={a_{pp}}/{b_{pp}}$ which is nonnegative due to the positivity assumptions on $\A$ and $\B$.
For any index $j\in[n]$, with $j\neq p$, according to Lemma~\ref{lem:pol}, $P^{\, \operatorname{up}}_{p j}$ and $P^{\, \operatorname{low}}_{p j}$  evaluate at $\lambda = {a_{pp}}/{b_{pp}}$ as 
\[
\begin{aligned}
P^{\, \operatorname{up}}_{p j}(\lambda) &= -\Bigl(r_p^+(\mathbf A)+\lambda\,r_p^-(\mathbf B)\Bigr)\Bigl(r_j^+(\mathbf A)+\lambda\,r_j^-(\mathbf B)\Bigr)\\
P^{\, \operatorname{low}}_{p j}(\lambda) &= -\Bigl(r_p^-(\mathbf A)+\lambda\,r_p^+(\mathbf B)\Bigr)\Bigl(r_j^-(\mathbf A)+\lambda\,r_j^+(\mathbf B)\Bigr),
\end{aligned}
\]
which are both nonpositive, since $\lambda$, $r^+$, and $r^-$ are all nonnegative.
%
By Lemma~\ref{lem:root}, $\lambda$ belongs to at least $n-1$ intervals (with $i=p$) defining $K_2$.\\

\noindent\textbf{Case 2:} $\mathbf{x}_p \geq \mathbf{x}_q>0\mathbf{,} \ p\neq q$\textbf{.} 
From now on, assume that $\x$ has at least two positive elements and $p$ and $q$ are distinct indices in $[n]$ of the largest and second largest elements of $\x$ as in Lemma~\ref{lem:two-rows-bounds}. The complementarity condition~\eqref{eq:EiCP_c} reads as
\begin{equation}\label{eq:two-rows-detailed}
(\lambda b_{ii}-a_{ii})\,x_i^2=\sum_{j\ne i}(a_{ij}-\lambda b_{ij})\,x_i x_j, \text{ for } i\in [n].
\end{equation}
Applying bounds from Lemma~\ref{lem:two-rows-bounds} to this identity for $i=p$ or $i=q$, 
and using $\lambda\ge 0$ by Property~\ref{prop:nonnegla} yield
\begin{equation}\label{eq:row-upper}
(\lambda b_{ii}-a_{ii})\,x_i^2
\;\le\; \bigl(r_i^+(\mathbf A)+\lambda r_i^-(\mathbf B)\bigr)\,x_p x_q, \text{ for } i\in\{p,q\}.
\end{equation}

If $\lambda>a_{ii}/b_{ii}$ for both $i\in\{p,q\}$, then left-hand sides of inequalities~\eqref{eq:row-upper} are positive. Multiplying the two inequalities and dividing by $x_p^2x_q^2>0$ give
\[
(\lambda b_{pp}-a_{pp})(\lambda b_{qq}-a_{qq})\ \le\ (r_p^{+}(\mathbf{A})+\lambda\,r_p^{-}(\mathbf{B})) \, (r_q^{+}(\mathbf{A})+\lambda\,r_q^{-}(\mathbf{B})).
\]
By rearranging all the terms to the left side and by using~\eqref{eq:puplong}, we have $P^{\mathrm{up}}_{pq}(\lambda)\le 0$.

We consider now that $\lambda\le a_{ii}/b_{ii}$ for some $i\in\{p,q\}$. From Case 1, we have that $P^{\mathrm{up}}_{pq}(a_{ii}/b_{ii})\le 0$, which allows concluding that 
 $\lambda\le a_{ii}/b_{ii} \leq \max\{y \in \R \mid P^{\, \operatorname{up}}_{pq}(y) =0\}$.
 

Symmetric arguments apply to prove that $\lambda\ge \min\{y \in \R \mid P^{\, \operatorname{low}}_{pq}(y) =0\}$. We
first multiply both sides of identity~\eqref{eq:two-rows-detailed} by $-1$, then we apply Lemma~\ref{lem:two-rows-bounds} with $\lambda\ge 0$, yielding 
\begin{equation}\label{eq:row-ower}
(a_{ii}-\lambda b_{ii})\,x_i^2
\;\le\; \bigl(\lambda r_i^+(\mathbf B) + r_i^-(\mathbf A)\bigr)\,x_p x_q, \text{ for } i\in\{p,q\}.
\end{equation}
If $\lambda < a_{ii}/b_{ii}$ in~\eqref{eq:row-ower} for both $i\in\{p,q\}$ then these inequalities have positive left-hand sides; their product reads $P^{\mathrm{low}}_{pq}(\lambda)\le 0$, and then $\lambda\ge \min\{y \in \R \mid P^{\, \operatorname{low}}_{pq}(y) =0\}$. If $\lambda\ge a_{ii}/b_{ii}$ for some $i\in\{p,q\}$, then $P^{\mathrm{low}}_{pq}(a_{ii}/b_{ii})\le 0$, and, by transitivity, $\lambda\ge a_{ii}/b_{ii}\ge \min\{y \in \R \mid P^{\, \operatorname{low}}_{pq}(y) =0\}$.
%
%

Thus we have shown that $\lambda$ belongs to at least one interval, with $(i,j)=(p,q)$, of $K_2$.

\end{proof}

\begin{remark}
From Case 1 in the latter proof, we see that each interval in the union defining $K_2$ is well-defined for every pair of indices \(i,j \in [n]\), as it contains the values $a_{ii}/b_{ii}$ and $a_{jj}/b_{jj}$. Indeed, 
applying Lemma~\ref{lem:pol} at $y=a_{ii}/b_{ii}$, which is nonnegative by hypothesis, yields $P^{\, \operatorname{low}}_{ij}(a_{ii}/b_{ii})\le 0$ and $P^{\, \operatorname{up}}_{ij}(a_{ii}/b_{ii})\le 0$, then, from Lemma~\ref{lem:root}, 
\[
\min\{y \in \R \mid P^{\, \operatorname{low}}_{ij}(y) = 0\} \le a_{ii}/b_{ii} \le \max\{y \in \R \mid P^{\, \operatorname{up}}_{ij}(y) = 0\}. 
\] 

\end{remark}

\section{Comparing the localization sets}\label{sec3}

In this section, we show that the two-row set $K_2$ presented in Theorem~\ref{thm:two-row-theorem} is smaller than the one-row set $K_1$ given in Theorem~\ref{thm:one-row-theorem}. At this aim, it is necessary to consider the same assumptions for both statements (in particular, the copositivity of $\A$ in $K_1$). Note that $K'_1 \subset K_1$ (by construction) and then prove that $K_2 \subset K'_1$ directly implies $K_2 \subset K_1$. Therefore, we introduce below the notations $C^{\operatorname{up}}_{ij}$ and $C^{\operatorname{low}}_{ij}$ representing the two-row version of the endpoints from the intervals, which define $K'_1$ in Corollary~\ref{cor:onecop}.




\begin{lemma}\label{lemma:derivative-both}
Let $\A,\B\in\mathbb S_n$. Assume that $\B$ is positive definite and strictly diagonally dominant, and $\A$ is copositive.
For any $i,j \in [n]$, such that $i\ne j$, let us define
\begin{equation}\label{eq:Cij}
C^{\operatorname{up}}_{ij}
:= \max\bigg\{\frac{a_i^{+}}{b_i^{-}},\ \frac{a_j^{+}}{b_j^{-}}\bigg\},\quad
C^{\operatorname{low}}_{ij}
:= \min\bigg\{\frac{a_i^{-}}{b_i^{+}},\ \frac{a_j^{-}}{b_j^{+}}\bigg\}, 
\end{equation}
\begin{equation}\label{eq:ystar}
y^{\star,\operatorname{up}}_{ij}
:= \frac{s_{ij}^{+}(\A,\B)}{2\,b_{ij}^{-}},\quad
y^{\star,\operatorname{low}}_{ij}
:= \frac{s_{ij}^{-}(\A,\B)}{2\,b_{ij}^{+}}.   
\end{equation}

Then, the following properties hold
\begin{enumerate}[(a)]
    \item $y^{\star,\operatorname{up}}_{ij}$ and $y^{\star,\operatorname{low}}_{ij}$ are the vertices of $P^{\operatorname{up}}_{ij}$ and $P^{\operatorname{low}}_{ij}$, respectively.
    \item $y^{\star,\operatorname{up}}_{ij} \le C^{\operatorname{up}}_{ij} $ and $y^{\star,\operatorname{low}}_{ij} \ge C^{\operatorname{low}}_{ij}$ for every $i,j \in [n]$, such that $i\ne j$.
\end{enumerate}
\end{lemma}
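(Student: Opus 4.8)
The plan is to dispatch part (a) by inspection and to reduce part (b) to a pair of sign computations on the derivatives of the two quadratics, which is exactly why the derivative test is the natural tool and the source of the lemma's name. For part (a), both $P^{\operatorname{up}}_{ij}$ and $P^{\operatorname{low}}_{ij}$ are quadratics in $y$ whose leading coefficients $b_{ij}^-$ and $b_{ij}^+$ are strictly positive by Lemma~\ref{lem:root} (equivalently Lemma~\ref{lemma:utility}$(ii)$); the vertex of a convex quadratic $\alpha y^2-\beta y+\gamma$ sits at $\beta/(2\alpha)$, and reading off $\alpha,\beta$ from Definition~\ref{def:bij} gives precisely $y^{\star,\operatorname{up}}_{ij}=s_{ij}^+/(2b_{ij}^-)$ and $y^{\star,\operatorname{low}}_{ij}=s_{ij}^-/(2b_{ij}^+)$. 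For part (b), since each parabola is strictly convex with its minimizer at the vertex, its derivative is strictly increasing and vanishes at the vertex; hence $y^{\star,\operatorname{up}}_{ij}\le C^{\operatorname{up}}_{ij}$ is equivalent to $(P^{\operatorname{up}}_{ij})'(C^{\operatorname{up}}_{ij})\ge 0$, and $y^{\star,\operatorname{low}}_{ij}\ge C^{\operatorname{low}}_{ij}$ is equivalent to $(P^{\operatorname{low}}_{ij})'(C^{\operatorname{low}}_{ij})\le 0$. The whole task becomes establishing these two sign conditions.

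The key device is to rewrite the factored forms of Lemma~\ref{lem:pol} in terms of the single-row endpoints. I would introduce the linear functions $L_i^{\operatorname{up}}(y):=b_i^-y-a_i^+$ and $L_i^{\operatorname{low}}(y):=b_i^+y-a_i^-$, whose unique roots are exactly the one-row ratios $a_i^+/b_i^-$ and $a_i^-/b_i^+$ entering $C^{\operatorname{up}}_{ij}$ and $C^{\operatorname{low}}_{ij}$, together with the nonnegative linear parts $g_i(y):=r_i^+(\A)+y\,r_i^-(\B)$ and $h_i(y):=r_i^-(\A)+y\,r_i^+(\B)$. The elementary identities $y b_{ii}-a_{ii}=L_i^{\operatorname{up}}(y)+g_i(y)=L_i^{\operatorname{low}}(y)-h_i(y)$ convert \eqref{eq:puplong}–\eqref{eq:plowlong} into
\[
P^{\operatorname{up}}_{ij}=L_i^{\operatorname{up}}L_j^{\operatorname{up}}+L_i^{\operatorname{up}}g_j+g_iL_j^{\operatorname{up}},
\qquad
P^{\operatorname{low}}_{ij}=L_i^{\operatorname{low}}L_j^{\operatorname{low}}-L_i^{\operatorname{low}}h_j-h_iL_j^{\operatorname{low}},
\]
the cross terms $g_ig_j$ and $h_ih_j$ cancelling against the subtracted products.

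Now I would differentiate these forms and evaluate at the relevant endpoint. For the upper bound take, without loss of generality, $C^{\operatorname{up}}_{ij}=a_i^+/b_i^-$, so $L_i^{\operatorname{up}}$ vanishes there while $L_j^{\operatorname{up}}(C^{\operatorname{up}}_{ij})\ge 0$ because $C^{\operatorname{up}}_{ij}\ge a_j^+/b_j^-$ and $L_j^{\operatorname{up}}$ is increasing. Differentiating and collecting the two surviving terms carrying $L_j^{\operatorname{up}}$ (their coefficient is $b_i^-+r_i^-(\B)=b_{ii}$), one gets
\[
(P^{\operatorname{up}}_{ij})'(C^{\operatorname{up}}_{ij})=b_{ii}\,L_j^{\operatorname{up}}(C^{\operatorname{up}}_{ij})+b_i^-\,g_j(C^{\operatorname{up}}_{ij})+g_i(C^{\operatorname{up}}_{ij})\,b_j^-,
\]
a sum of nonnegative terms, since $C^{\operatorname{up}}_{ij}\ge 0$ (as $a_i^+\ge 0$ under copositivity of $\A$), whence $g_i,g_j\ge 0$ and $b_i^-,b_j^-,b_{ii}>0$. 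The lower case is symmetric: with $C^{\operatorname{low}}_{ij}=a_i^-/b_i^+$ one has $L_i^{\operatorname{low}}(C^{\operatorname{low}}_{ij})=0$, $L_j^{\operatorname{low}}(C^{\operatorname{low}}_{ij})\le 0$, and after the same grouping (coefficient $b_i^+-r_i^+(\B)=b_{ii}$),
\[
(P^{\operatorname{low}}_{ij})'(C^{\operatorname{low}}_{ij})=b_{ii}\,L_j^{\operatorname{low}}(C^{\operatorname{low}}_{ij})-b_i^+\,h_j(C^{\operatorname{low}}_{ij})-h_i(C^{\operatorname{low}}_{ij})\,b_j^+.
\]

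The main obstacle lies precisely in this last identity: unlike $a_i^+$, the shifted diagonal $a_i^-=a_{ii}-r_i^-(\A)$ may be negative, so $C^{\operatorname{low}}_{ij}$ can be negative and then $h_i,h_j$ need not be nonnegative, breaking the sign count. I would resolve this by a case split on the sign of $C^{\operatorname{low}}_{ij}$. If $C^{\operatorname{low}}_{ij}\ge 0$, then $h_i,h_j\ge 0$ and all three terms above are $\le 0$, giving $(P^{\operatorname{low}}_{ij})'(C^{\operatorname{low}}_{ij})\le 0$ as required. If $C^{\operatorname{low}}_{ij}<0$, the claimed inequality is immediate, because $y^{\star,\operatorname{low}}_{ij}=s_{ij}^-/(2b_{ij}^+)\ge 0$: indeed $s_{ij}^-=a_{ii}b_{jj}+a_{jj}b_{ii}+r_i^-(\A)r_j^+(\B)+r_j^-(\A)r_i^+(\B)$ is a sum of products of nonnegative quantities under copositivity of $\A$ and positive definiteness of $\B$, while $b_{ij}^+>0$. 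This completes the plan.
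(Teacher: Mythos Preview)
Your proof is correct, but it follows a genuinely different route from the paper's. The paper proves part~(b) by a direct algebraic manipulation: starting from $a_i^{+}\le C^{\operatorname{up}}_{ij}\,b_i^{-}$ and $a_j^{+}\le C^{\operatorname{up}}_{ij}\,b_j^{-}$, it multiplies the first by $b_{jj}+r_j^{-}(\B)$ and the second by $b_{ii}+r_i^{-}(\B)$, adds, and observes that the right-hand side collapses to $2\,C^{\operatorname{up}}_{ij}\,b_{ij}^{-}$ while the left-hand side is $s_{ij}^{+}(\A,\B)$ plus nonnegative terms; this yields $s_{ij}^{+}\le 2\,C^{\operatorname{up}}_{ij}\,b_{ij}^{-}$ in one stroke, and the lower case is handled symmetrically with multipliers $b_{jj}-r_j^{+}(\B)>0$ and $b_{ii}-r_i^{+}(\B)>0$. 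Your approach instead converts the inequality into a derivative sign condition and exploits the factorization $P^{\operatorname{up}}_{ij}=L_i^{\operatorname{up}}L_j^{\operatorname{up}}+L_i^{\operatorname{up}}g_j+g_iL_j^{\operatorname{up}}$, then evaluates $(P^{\operatorname{up}}_{ij})'$ at the root of one linear factor. Your argument is more geometric and explains why the one-row endpoints $a_i^{\pm}/b_i^{\mp}$ are the natural evaluation points (they are roots of the $L$-factors), and it motivates the lemma's ``derivative'' label; the paper's argument is shorter and, notably, handles the lower case uniformly without your case split on the sign of $C^{\operatorname{low}}_{ij}$, since the multipliers $b_{jj}-r_j^{+}(\B)$ remain positive regardless.
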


\begin{proof}
To prove the statement in (a), recall from Lemma~\ref{lem:root} that $P^{\,\operatorname{up}}_{ij}$ is a strictly convex quadratic function. Its minimum occurs at the vertex of this parabola, which can be found by setting the first derivative equal to zero. This gives $y^{\star,\,\operatorname{up}}_{ij}$ in~\eqref{eq:ystar}. Similarly, $y^{\star,\,\operatorname{low}}$ in~\eqref{eq:ystar} is the vertex of 
 $P^{\operatorname{low}}_{ij}$. 

The two inequalities in (b) can be obtained by the following reasoning.
\begin{enumerate}[(i)]
    \item From the definition of $C^{\operatorname{up}}_{ij}$ and  $b_i^->0$ and $b_j^->0$ as $\B$ is PD and SDD:
\[
a_i^{+}\le C^{\operatorname{up}}_{ij}\,b_i^{-},\quad
a_j^{+}\le C^{\operatorname{up}}_{ij}\,b_j^{-}.
\]
Multiplying the first inequality by $b_{jj}+r_j^{-}(\B)$ and the second one by $b_{ii}+r_i^{-}(\B)$ and adding the two resulting inequalities yield
\[
a_i^{+}(b_{jj}+r_j^{-}(\B))+a_j^{+}(b_{ii}+r_i^{-}(\B))
\ \le\
C^{\operatorname{up}}_{ij}\,\bigl(b_i^{-}(b_{jj}+r_j^{-}(\B))+b_j^{-}(b_{ii}+r_i^{-}(\B))\bigr).
\]
The right-hand side equals $2\,C^{\operatorname{up}}_{ij}b_{ij}^{-}$ and expanding the left-hand side gives
\[
s_{ij}^{+}(\A,\B)\;+\;a_{ii}r_j^{-}(\B)+a_{jj}r_i^{-}(\B)+r_i^{+}(\A)b_{jj}+r_j^{+}(\A)b_{ii},
\]
where all terms are nonnegative. Thus $s_{ij}^{+}(\A,\B)\le 2\,C^{\operatorname{up}}_{ij}\,b_{ij}^{-}$, which yields
$$y^{\star,\operatorname{up}}_{ij}:= \frac{s_{ij}^{+}(\A,\B)}{2\,b_{ij}^{-}} \le C^{\operatorname{up}}_{ij}.$$

\item Similarly, as $b^- > 0$ we have
\[
a_i^{-}(b_{jj}-r_j^{+}(\B))+a_j^{-}(b_{ii}-r_i^{+}(\B))
\ \ge\
C^{\operatorname{low}}_{ij}\,\bigl(b_i^{+}(b_{jj}-r_j^{+}(\B))+b_j^{+}(b_{ii}-r_i^{+}(\B))\bigr)
\]
which yields
$$s_{ij}^{-}(\A,\B)\ge s_{ij}^{-}(\A,\B)\;-\;a_{ii}r_j^{+}(\B)-a_{jj}r_i^{+}(\B)-r_i^{-}(\A)b_{jj}-r_j^{-}(\A)b_{ii}\ge 2\,C^{\operatorname{low}}_{ij}\,b_{ij}^{+},$$ hence
$y^{\star,\operatorname{low}}_{ij} \ge C^{\operatorname{low}}_{ij}$.
\end{enumerate}
\end{proof}

In the proof of the following theorem, we can assume without loss of generality that $C^{\operatorname{low}}_{ij} > 0$ because, otherwise, from the definition of  $K'_1$ in Corollary~\ref{cor:onecop}, the lower bound of the interval associated with $(i,j)$ is zero (which is always equal to or less than any endpoint of the intervals that define $K_2$).

\begin{theorem}[$K_2$ is tighter than $K_1$]\label{thm:2row-in-1row}
Let $\A,\B\in\mathbb S_n$. Assume that $\B$ is positive definite and strictly diagonally dominant, and that $\A$ is copositive, then $K_2 \subset K_1$ (precisely $K_2 \subset K'_1 \subset K_1$).
\end{theorem}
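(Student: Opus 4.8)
The plan is to reduce the inclusion to a statement about a single pair of indices and then bound the two endpoints of each $K_2$-interval separately, using the vertex estimates of Lemma~\ref{lemma:derivative-both} together with the factored forms of Lemma~\ref{lem:pol}. Since $K'_1\subset K_1$ holds by construction (Corollary~\ref{cor:onecop}), it suffices to prove $K_2\subset K'_1$. I would fix $i\ne j$ and consider the associated interval $[\ell_{ij},u_{ij}]$ of $K_2$, where $u_{ij}:=\max\{y\in\R:P^{\operatorname{up}}_{ij}(y)=0\}$ and $\ell_{ij}:=\max\{0,\min\{y\in\R:P^{\operatorname{low}}_{ij}(y)=0\}\}$, and show it lands inside $K'_1$. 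As in the remark preceding the statement, I may assume $C^{\operatorname{low}}_{ij}>0$.

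For the upper endpoint, I would evaluate $P^{\operatorname{up}}_{ij}$ at $C^{\operatorname{up}}_{ij}$ through the factored expression \eqref{eq:puplong}. Assuming without loss of generality $C^{\operatorname{up}}_{ij}=a_i^+/b_i^-$, the defining equality $C^{\operatorname{up}}_{ij}b_i^-=a_i^+$ rewrites as $C^{\operatorname{up}}_{ij}b_{ii}-a_{ii}=r_i^+(\A)+C^{\operatorname{up}}_{ij}r_i^-(\B)$, so the first bracket of \eqref{eq:puplong} cancels against the first $r$-factor and $P^{\operatorname{up}}_{ij}(C^{\operatorname{up}}_{ij})$ collapses to $(r_i^+(\A)+C^{\operatorname{up}}_{ij}r_i^-(\B))(C^{\operatorname{up}}_{ij}b_j^--a_j^+)$. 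Both factors are nonnegative, since $C^{\operatorname{up}}_{ij}\ge 0$ (copositivity of $\A$ and Lemma~\ref{lemma:utility}$(ii)$) and $C^{\operatorname{up}}_{ij}\ge a_j^+/b_j^-$ with $b_j^->0$; hence $P^{\operatorname{up}}_{ij}(C^{\operatorname{up}}_{ij})\ge 0$. Because $P^{\operatorname{up}}_{ij}$ is strictly convex with real roots (Lemma~\ref{lem:root}) and its vertex satisfies $y^{\star,\operatorname{up}}_{ij}\le C^{\operatorname{up}}_{ij}$ by Lemma~\ref{lemma:derivative-both}$(b)$, a point at or past the vertex where the polynomial is nonnegative must lie weakly to the right of the larger root; therefore $u_{ij}\le C^{\operatorname{up}}_{ij}$.

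A symmetric computation handles the lower endpoint. Writing $C^{\operatorname{low}}_{ij}=a_i^-/b_i^+$ without loss of generality, the equality $C^{\operatorname{low}}_{ij}b_i^+=a_i^-$ gives $C^{\operatorname{low}}_{ij}b_{ii}-a_{ii}=-(r_i^-(\A)+C^{\operatorname{low}}_{ij}r_i^+(\B))$, and \eqref{eq:plowlong} then factors as $P^{\operatorname{low}}_{ij}(C^{\operatorname{low}}_{ij})=-(r_i^-(\A)+C^{\operatorname{low}}_{ij}r_i^+(\B))(C^{\operatorname{low}}_{ij}b_j^+-a_j^-)$, which is nonnegative because the first factor is nonnegative (as $C^{\operatorname{low}}_{ij}>0$) and $C^{\operatorname{low}}_{ij}b_j^+\le a_j^-$. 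Combined with $y^{\star,\operatorname{low}}_{ij}\ge C^{\operatorname{low}}_{ij}$ (Lemma~\ref{lemma:derivative-both}$(b)$) and strict convexity, this forces $C^{\operatorname{low}}_{ij}$ to lie weakly to the left of the smaller root of $P^{\operatorname{low}}_{ij}$, so $\ell_{ij}\ge C^{\operatorname{low}}_{ij}$. Together with the previous paragraph this yields the sandwich $[\ell_{ij},u_{ij}]\subseteq[C^{\operatorname{low}}_{ij},C^{\operatorname{up}}_{ij}]$.

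The remaining step, which I expect to be the principal obstacle, is to convert this sandwich into genuine membership in $K'_1$. The interval $[C^{\operatorname{low}}_{ij},C^{\operatorname{up}}_{ij}]$ is only the convex hull of the two single-row intervals $[\,a_i^-/b_i^+,\,a_i^+/b_i^-\,]$ and $[\,a_j^-/b_j^+,\,a_j^+/b_j^-\,]$ that $K'_1$ contributes for rows $i$ and $j$, whereas $K'_1$ is a union that need not be connected. The endpoint bounds alone therefore do not rule out that $[\ell_{ij},u_{ij}]$ straddles a gap between the row-$i$ and row-$j$ pieces, so the convexity and vertex bookkeeping above are not by themselves conclusive. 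To close the argument I would have to sharpen the estimates, comparing $u_{ij}$ and $\ell_{ij}$ directly against the individual row endpoints $a_i^+/b_i^-,\ a_j^+/b_j^-$ and $a_i^-/b_i^+,\ a_j^-/b_j^+$ rather than merely their extremes, or else establish that the two row-intervals always overlap under the standing hypotheses. This reconciliation of the single $K_2$-interval with the union structure of $K'_1$, and not the endpoint inequalities, is where I would concentrate the effort.
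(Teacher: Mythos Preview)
Your endpoint estimates coincide with the paper's argument almost verbatim. The paper packages the same computation by noting that $y>C^{\operatorname{up}}_{ij}$ forces $b_{ii}y-a_{ii}>r_i^+(\A)+y\,r_i^-(\B)\ge 0$ for both indices, hence $P^{\operatorname{up}}_{ij}(y)>0$, and then invokes Lemma~\ref{lemma:derivative-both}(b) to place $y$ past the larger root; the step below $C^{\operatorname{low}}_{ij}$ is symmetric. The outcome is exactly your sandwich $[\ell_{ij},u_{ij}]\subseteq[C^{\operatorname{low}}_{ij},C^{\operatorname{up}}_{ij}]$.

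The obstacle you isolate is genuine, and the paper does not resolve it either: after obtaining the sandwich it simply asserts that ``every interval contributing to $K_2$ is contained in some interval contributing to $K'_1$,'' which does not follow from what has been shown. In fact the set inclusion $K_2\subset K'_1$ fails in general. For $n=2$ with $\A=\diag(1,100)$ and $\B=\I$, all off-diagonal row sums vanish, so $K'_1=\{1\}\cup\{100\}$, while both $P^{\operatorname{up}}_{12}$ and $P^{\operatorname{low}}_{12}$ reduce to $(y-1)(y-100)$ and the unique $K_2$-interval is $[1,100]\not\subset K'_1$. What your argument (and the paper's) actually establishes is that each $K_2$-interval lies inside the \emph{convex hull} of the corresponding pair of $K'_1$-intervals, hence $\operatorname{conv}(K_2)\subset\operatorname{conv}(K'_1)$; equivalently, the two-row bounds of Corollary~\ref{cor:two-row-bounds} dominate the one-row bounds of Corollary~\ref{cor:one-row-bounds}. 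So you should not keep trying to reconcile the union structure---the right move is to weaken the target to this convex-hull statement.
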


\begin{proof}
For any $i\in[n]$ and $y\in\mathbb R$, let us denote 
$$X_i(y)=b_{ii}y-a_{ii},\quad 
U_i(y)=r_i^+(\A)+yr_i^-(\B),\quad 
L_i(y)=r_i^-(\A)+yr_i^+(\B).$$
By Lemma~\ref{lem:pol}, for $i,j\in[n], i\neq j$, we have
\begin{equation} \label{eq:PXUL}
P^{\,\operatorname{up}}_{ij}(y) =X_i(y) \, X_j(y)-U_i(y) \, U_j(y),\quad 
P^{\,\operatorname{low}}_{ij}(y) =X_i(y) \, X_j(y)-L_i(y) \, L_j(y).   
\end{equation}
Furthermore, $U_i(y)\ge 0$ and $L_i(y)\ge 0$ for $y\ge 0$ and we have the equivalences:
\begin{align}
X_i(y)>U_i(y)
~&\Longleftrightarrow~ (b_{ii}-r_i^{-}(\B))\,y>a_{ii}+r_i^{+}(\A)
~\Longleftrightarrow~ y> {a_i^{+}}/{\,b_i^{-}\,}, \label{eq:XiUi}\\
-X_i(y)>L_i(y)
~&\Longleftrightarrow~ (b_{ii}+r_i^{+}(\B))\,y<a_{ii}-r_i^{-}(\A)
~\Longleftrightarrow~ y< {a_i^{-}}/{\,b_i^{+}\,}. \label{eq:YiLi}
\end{align}

\begin{itemize}
\item If $y>\max\!\bigg\{\frac{a_i^{+}}{b_i^{-}},\,\frac{a_j^{+}}{b_j^{-}}  \bigg\} = C^{\,\operatorname{up}}_{ij}$, then $X_i(y)>U_i(y)\ge 0$ and $X_j(y)>U_j(y)\ge 0$. 
By using the definition of $P^{\,\operatorname{up}}_{ij}$ in~\eqref{eq:PXUL}, these conditions yield to $P^{\,\operatorname{up}}_{ij}(y)> 0$. Since $P^{\,\operatorname{up}}_{ij}$ is a strictly convex quadratic function, the values of $y$ such that $P^{\,\operatorname{up}}_{ij}$ is positive lie beyond its smallest and largest root. However, from Lemma~\ref{lemma:derivative-both}, we get $y> C^{\,\operatorname{up}}_{ij} \geq y^{\star,\,\operatorname{up}}_{ij}$, that is $y$ lies to the \emph{right} of the largest root of $P^{\,\operatorname{up}}_{ij}$.

\smallskip

    \item If $0 \le y<\min\!\bigg\{\frac{a_i^{-}}{b_i^{+}},\,\frac{a_j^{-}}{b_j^{+}}\bigg\} = C^{\operatorname{low}}_{ij}$, then $-X_i(y)>L_i(y)\ge 0$ and $-X_j(y)>L_j(y)\ge 0$, hence $P^{\,\operatorname{low}}_{ij}(y)>0$.
Again, from 
Lemma~\ref{lemma:derivative-both},
$y$ lies to the \emph{left} of the vertex $y^{\star,\,\operatorname{low}}_{ij}$ of $P^{\,\operatorname{low}}_{ij}$, and to its smallest root.
\end{itemize}
\smallskip
Since the endpoints that define $ K'_1$ are $\frac{a_i^{-}}{b_i^{+}}$ and $\frac{a_i^{+}}{b_i^{-}}$, for $i\in[n]$, 
and the endpoints that define $K_2$ are the smallest roots of $P^{\,\operatorname{low}}_{ij}$ and largest roots of $P^{\,\operatorname{up}}_{ij}$, for $i\neq j$, the two items above imply that every interval contributing to $K_2$ is contained in some interval contributing to $ K'_1$. Taking the union over all ordered pairs $(i,j)$ with $i\neq j$ yields $K_2\subset  K'_1$ implying $K_2\subset  K_1$.

\end{proof}

\section{Lower bounds and upper bounds}\label{sec:lbub}

The localization sets \(K_1\) and \(K_2\) constrain each complementarity eigenvalue, which is stronger than giving only global lower or upper bounds. For practical screening and benchmarking, it is often enough to control the extremes of the complementarity spectrum. We therefore extract computable bounds for the largest and the smallest complementarity eigenvalues, first in the one-row case and then in the two-row case, as stated in Corollaries~\ref{cor:one-row-bounds} and~\ref{cor:two-row-bounds}.

\begin{corollary}[One-row bounds]\label{cor:one-row-bounds}
Let $\A,\B\in\mathbb S_n$, and assume that $\B$ is positive definite and strictly diagonally dominant.  
Let $\lambda$ be a complementarity eigenvalue of $(\A,\B)$. Then,
\begin{equation*}
\min\!\left\{
 \min_{\,i\in[n]}\; \frac{\,a_i^{-}\,}{\,b_i^{-}\,},\; \min_{\,i\in[n]}\;
\frac{\,a_i^{-}\,}{\,b_i^{+}\,}
\right\} \leq \lambda \leq 
\max\!\left\{
\max_{\,i\in[n]}\;\frac{\,a_i^{+}\,}{\,b_i^{-}\,},\;\max_{\,i\in[n]}\;
\frac{\,a_i^{+}\,}{\,b_i^{+}\,}
\right\}.
\end{equation*}
\end{corollary}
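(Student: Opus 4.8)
The plan is to deduce this corollary directly from the one-row localization of Theorem~\ref{thm:one-row-theorem}, since the hypotheses are identical. Recall that that theorem places $\lambda$ in the union $K_1$ of the $n$ intervals indexed by $i\in[n]$. Because $\lambda\in K_1$, there exists at least one index $p\in[n]$ for which $\lambda$ lies in the corresponding interval, that is
\[
\min\!\left(\frac{a_p^{-}}{b_p^{-}},\frac{a_p^{-}}{b_p^{+}}\right)\ \le\ \lambda\ \le\ \max\!\left(\frac{a_p^{+}}{b_p^{-}},\frac{a_p^{+}}{b_p^{+}}\right).
\]
All the quotients appearing here are well defined, since Lemma~\ref{lemma:utility}$(ii)$ guarantees $b_i^{+}\ge b_i^{-}>0$ for every $i\in[n]$ under the standing hypotheses on $\B$ (symmetric, $\operatorname{PD}$, and $\operatorname{SDD}$, hence $b_{ii}>0$).

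Next I would pass from the index-$p$ bounds to index-free bounds by enlarging each interval to the extreme endpoints over all rows. For the left side, the left endpoint of the $p$-th interval is at least the minimum of all left endpoints, so
\[
\lambda\ \ge\ \min\!\left(\frac{a_p^{-}}{b_p^{-}},\frac{a_p^{-}}{b_p^{+}}\right)\ \ge\ \min_{i\in[n]}\ \min\!\left(\frac{a_i^{-}}{b_i^{-}},\frac{a_i^{-}}{b_i^{+}}\right),
\]
and symmetrically $\lambda\le \max_{i\in[n]}\max\!\left(a_i^{+}/b_i^{-},\,a_i^{+}/b_i^{+}\right)$. This step is where the dependence on the particular eigenvector-induced index $p$ is eliminated, at the cost of weakening the per-row containment of $K_1$ into a single global interval.

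Finally I would rewrite the nested extrema using the elementary identities $\min_{i}\min(u_i,v_i)=\min\{\min_i u_i,\ \min_i v_i\}$ and $\max_{i}\max(u_i,v_i)=\max\{\max_i u_i,\ \max_i v_i\}$, applied with $u_i=a_i^{-}/b_i^{-}$, $v_i=a_i^{-}/b_i^{+}$ for the lower bound, and with $u_i=a_i^{+}/b_i^{-}$, $v_i=a_i^{+}/b_i^{+}$ for the upper bound. This yields exactly the displayed two-sided inequality. There is essentially no genuine obstacle here: the only point requiring a moment's care is the commutation of the outer $\min$ (resp. $\max$) with the inner binary $\min$ (resp. $\max$), which is immediate; the substance of the result is already carried by Theorem~\ref{thm:one-row-theorem}, and this corollary merely records its weakest, fully computable scalar consequence.
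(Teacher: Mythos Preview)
Your argument is correct and matches the paper's own treatment: the paper does not give an explicit proof of this corollary but simply remarks that the bounds are the endpoints of $\operatorname{conv}(K_1)$, which is exactly what you unpack by taking the extreme endpoints of the intervals in Theorem~\ref{thm:one-row-theorem} and commuting the nested $\min$/$\max$.
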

\medskip

\begin{corollary}[Two-row bounds]\label{cor:two-row-bounds}
Let $\A,\B\in\mathbb S_n$. Assume that $\B$ is positive definite and strictly diagonally dominant, and $\A$ is copositive. Let $\lambda$ be a complementarity eigenvalue of $(\A,\B)$. Then,
\begin{equation*}
\min_{\substack{i,j\in[n]\\ i\ne j}}\;
\, 
\frac{ s_{ij}^{-}(\A,\B) - \sqrt{ s_{ij}^-(\A,\B)^2- \, 4\, b_{ij}^+\,\rowmm{a}{i}{j}}
}{ 2\, b_{ij}^{+} }
 \leq \lambda \le\;
\max_{\substack{i,j\in[n]\\ i\ne j}}\;
\, 
\frac{ s_{ij}^{+}(\A,\B) +
\sqrt{s_{ij}^+(\A,\B)^2- \, 4\, b_{ij}^-\,\rowpp{a}{i}{j}}
}{ 2\, b_{ij}^{-} }.
\end{equation*}
\end{corollary}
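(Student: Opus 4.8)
The plan is to read off the two global bounds directly from the two-row localization set $K_2$ of Theorem~\ref{thm:two-row-theorem}. Since $\lambda$ is a complementarity eigenvalue of $(\A,\B)$ under the stated hypotheses, that theorem gives $\lambda\in K_2$, so there exists at least one ordered pair $(p,q)$ with $p\neq q$ for which $\lambda$ lies in the associated interval $\left[\max\{0,\ \min\{y\in\R\mid P^{\operatorname{low}}_{pq}(y)=0\}\},\ \max\{y\in\R\mid P^{\operatorname{up}}_{pq}(y)=0\}\right]$. On the lower side I would discard the floor at zero, using $\lambda\ge\max\{0,r\}\ge r$ where $r$ is the smallest root of $P^{\operatorname{low}}_{pq}$, to obtain the (possibly weaker but still valid) inequality $\lambda\ge\min\{y\mid P^{\operatorname{low}}_{pq}(y)=0\}$; on the upper side I would keep $\lambda\le\max\{y\mid P^{\operatorname{up}}_{pq}(y)=0\}$ as is.

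The second step is to make these two roots explicit through the quadratic formula. By Lemma~\ref{lem:root} both $P^{\operatorname{low}}_{ij}$ and $P^{\operatorname{up}}_{ij}$ are strictly convex---their leading coefficients $\rowpp{b}{i}{j}$ and $\rowmm{b}{i}{j}$ are positive by Lemma~\ref{lemma:utility}$(ii)$---and carry nonnegative discriminants $\Delta^{\operatorname{low}}_{ij}=s_{ij}^-(\A,\B)^2-4\,\rowpp{b}{i}{j}\,\rowmm{a}{i}{j}$ and $\Delta^{\operatorname{up}}_{ij}=s_{ij}^+(\A,\B)^2-4\,\rowmm{b}{i}{j}\,\rowpp{a}{i}{j}$. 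Reading off $P^{\operatorname{low}}_{ij}$ and $P^{\operatorname{up}}_{ij}$ from Definition~\ref{def:bij}, the smallest root of $P^{\operatorname{low}}_{ij}$ is $\bigl(s_{ij}^-(\A,\B)-\sqrt{\Delta^{\operatorname{low}}_{ij}}\bigr)/(2\,\rowpp{b}{i}{j})$ and the largest root of $P^{\operatorname{up}}_{ij}$ is $\bigl(s_{ij}^+(\A,\B)+\sqrt{\Delta^{\operatorname{up}}_{ij}}\bigr)/(2\,\rowmm{b}{i}{j})$, which are exactly the pair-indexed expressions appearing in the statement.

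The final step removes the dependence on the specific pair $(p,q)$: the smallest root attached to $(p,q)$ is at least the minimum of the smallest roots over all pairs $i\neq j$, and the largest root attached to $(p,q)$ is at most the maximum of the largest roots over all pairs. Chaining these two inequalities with the bounds from the first step yields precisely the displayed lower and upper bounds for $\lambda$.

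I do not anticipate a real obstacle: the argument is a direct specialization of the already-established set $K_2$ to its extreme endpoints, combined with the quadratic formula. The only point that must be invoked rather than recomputed is the reality of the two roots, i.e.\ the nonnegativity of $\Delta^{\operatorname{low}}_{ij}$ and $\Delta^{\operatorname{up}}_{ij}$, which is exactly what Lemma~\ref{lem:root} supplies; without it the square roots in the statement would not be guaranteed to be real.
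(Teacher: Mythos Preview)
Your proposal is correct and matches the paper's own (implicit) argument: the paper does not give a separate proof of this corollary but simply remarks that the bounds are the extreme endpoints of $K_2$, i.e.\ $\lambda\in\operatorname{conv}(K_2)$, which is precisely what you spell out by invoking Theorem~\ref{thm:two-row-theorem}, applying the quadratic formula with the discriminants from Lemma~\ref{lem:root}, and then taking the min/max over pairs.
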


It is possible to interpret both bounds above as the \emph{convex hull} of $K_1$ and $K_2$. In other words, Corollaries~\ref{cor:one-row-bounds} and~\ref{cor:two-row-bounds} can be rewritten as $\lambda \in \operatorname{conv}(K_1)$ and $\lambda \in \operatorname{conv}(K_2)$, respectively.

\begin{example}\label{Example1}
    Consider the matrices in $\R^3$ given by
    \[
    \A =
    \begin{pmatrix}
    14 & 1  & 1 \\
     1 & 11 & -2\\
     1 & -2 & 13
    \end{pmatrix},
    \quad
    \B =
    \begin{pmatrix}
    6 & 0 & 0\\
    0 & 10 & 2\\
    0 & 2  & 10
    \end{pmatrix}.
    \]
\end{example}

Note that both matrices are SDDs with positive diagonals, so they are both PD. In particular, $\A$ is copositive. Then, the assumptions of Theorem~\ref{thm:one-row-theorem} and Theorem~\ref{thm:two-row-theorem} hold.

As described in the introduction, we can enumerate complementarity eigenpairs by supports $S\subset\{1,2,3\}$: for each nonempty $S$, candidates $\lambda$ are the generalized eigenvalues of $(\A_{SS},\B_{SS})$ with $\mathbf x_S\ge \mathbf 0$, then verified by the off-support inequalities $(\A_{iS}-\lambda \B_{iS})\x\ge \mathbf{0}$ when $i \notin S$. 
Feasible supports are $\{1\}$, $\{1,2\}$, $\{1,3\}$, $\{2,3\}$ and $\{1,2,3\}$, and the complementarity spectrum is
\[
\Pi \approx \left\{0.822,\;2.333,\;2.347,\;2.349,\;2.352\right\}.
\]

\begin{enumerate}[(i)]
\item  The one-row enclosure from Corollary~\ref{cor:one-row-bounds} gives
\[
\Pi \subset \operatorname{conv}(K_1) = \left[\dfrac34,\dfrac83\right]
\approx  \left[0.750,\;2.667\right].
\]

\item The two-row enclosure from Corollary~\ref{cor:two-row-bounds} reduces to
\[
\Pi \subset \operatorname{conv}(K_2) = \left[\dfrac{31-\sqrt{127}}{24},\ \dfrac{109+\sqrt{1081}}{60}\right]\approx \left[0.822,\;2.365\right].
\]
As shown in Theorem~\ref{thm:2row-in-1row}, we have that $ \operatorname{conv}(K_2) \subset \operatorname{conv}(K_1)$.


\end{enumerate}

Let us compare this with the spectrum $[\mu_{\min}(\mathbf A,\mathbf B),\mu_{\max}(\mathbf A,\mathbf B)]$ of the classical generalized eigenvalues.
The smallest and largest generalized eigenvalues of two symmetric matrices $(\A,\B)$ and with $\B$ being PD can be defined as the solution of minimization and maximization problems of the generalized Rayleigh quotient~\eqref{eq:Rayquo} for $\x\neq \mathbf 0$ (see Sect. A.5.3 in \cite{Boyd2004}), that is 
\begin{align}\label{eq:minmaxmu}
\mu_{\min}(\mathbf A,\mathbf B):=\min_{\mathbf x\neq \mathbf 0}\frac{\mathbf x^\top \mathbf A\,\mathbf x}{\mathbf x^\top \mathbf B\,\mathbf x},\quad
\mu_{\max}(\mathbf A,\mathbf B):=\max_{\mathbf x\neq \mathbf 0}\frac{\mathbf x^\top \mathbf A\,\mathbf x}{\mathbf x^\top \mathbf B\,\mathbf x}.
\end{align}
Then, we have $\Pi\subset[\mu_{\min}(\mathbf A,\mathbf B),\mu_{\max}(\mathbf A,\mathbf B)]\approx[0.804,\,2.352]$. Hence, 
in this instance, the generalized spectrum is also a localization set for the complementarity eigenvalues, which
is contained in $ \operatorname{conv}(K_1)$ but only overlaps $ \operatorname{conv}(K_2)$.

\section{Comparison with the generalized spectrum}\label{section5}

In this section, we show that this last remark 
is not incidental: the smallest and largest generalized eigenvalues always provide lower and upper bounds for the complementary eigenvalues of~\eqref{eq:EiCP}, and they are not comparable with the bounds of Section~\ref{sec:lbub}.

\begin{proposition}[Generalized spectral localization set]
Let $\mathbf A,\mathbf B\in \mathbb{S}_n$ and assume that $\mathbf B$ is positive definite. If $\lambda$ is a complementarity eigenvalue of $(\A,\B)$ and $\mu_{\min}(\mathbf A,\mathbf B)$ and $\mu_{\max}(\mathbf A,\mathbf B)$
are the minimum and maximum generalized eigenvalues of $(\A,\B)$, then
$$\lambda\in [\mu_{\min}(\mathbf A,\mathbf B),\mu_{\max}(\mathbf A,\mathbf B)]:=\Gamma$$
\end{proposition}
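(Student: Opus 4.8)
The plan is to reduce the statement to the variational characterization of the extreme generalized eigenvalues in~\eqref{eq:minmaxmu}, exploiting the fact that every complementarity eigenvalue is realized as a generalized Rayleigh quotient. The key observation, already recorded in~\eqref{eq:Rayquo}, is that the orthogonality condition~\eqref{eq:EiCP_c} does all the work: it identifies $\lambda$ with the value of the generalized Rayleigh quotient at the associated eigenvector.

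First I would fix a complementarity eigenpair $(\x,\lambda)$ of $(\A,\B)$. From the normalization~\eqref{eq:EiCP_d} we have $\e^\top\x=1$, hence $\x\neq\mathbf 0$. Since $\B$ is positive definite, Definition~\ref{def:positivity} gives $\x^\top\B\,\x>0$, so the generalized Rayleigh quotient is well defined at $\x$. Rearranging the orthogonality condition~\eqref{eq:EiCP_c}, namely $\x^\top(\A-\lambda\B)\x=0$, yields $\x^\top\A\,\x=\lambda\,\x^\top\B\,\x$, and dividing through by the strictly positive denominator recovers exactly~\eqref{eq:Rayquo}:
\[
\lambda=\frac{\x^\top\A\,\x}{\x^\top\B\,\x}.
\]

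With this representation in hand, the conclusion is immediate. By the definitions in~\eqref{eq:minmaxmu}, the quantities $\mu_{\min}(\A,\B)$ and $\mu_{\max}(\A,\B)$ are the infimum and supremum of the generalized Rayleigh quotient over all nonzero vectors. Applying these bounds to the particular nonzero vector $\x$ gives
\[
\mu_{\min}(\A,\B)\ \le\ \frac{\x^\top\A\,\x}{\x^\top\B\,\x}\ =\ \lambda\ \le\ \mu_{\max}(\A,\B),
\]
which is precisely $\lambda\in[\mu_{\min}(\A,\B),\mu_{\max}(\A,\B)]=\Gamma$.

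There is essentially no hard step here: the result is a one-line consequence of combining the Rayleigh-quotient identity with the variational formulas. The only point requiring care is ensuring the quotient is well defined, which is where the positive definiteness of $\B$ is used; it is worth emphasizing that, unlike the sets $K_1$ and $K_2$, this enclosure requires neither strict diagonal dominance of $\B$ nor copositivity of $\A$, so $\Gamma$ is available under the weakest hypotheses. The genuinely interesting content lies not in this proposition but in the subsequent comparison showing that $\Gamma$ is incomparable with $\operatorname{conv}(K_1)$ and $\operatorname{conv}(K_2)$.
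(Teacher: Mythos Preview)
Your proof is correct and follows essentially the same route as the paper: use the orthogonality condition~\eqref{eq:EiCP_c} together with $\x\neq\mathbf 0$ and positive definiteness of $\B$ to express $\lambda$ as the generalized Rayleigh quotient, then invoke the variational characterization~\eqref{eq:minmaxmu}. The only difference is that you spell out a few routine details (e.g., deducing $\x\neq\mathbf 0$ from~\eqref{eq:EiCP_d}) and add commentary; the mathematical content is identical.
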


\begin{proof}
Let $(\mathbf x,\lambda)$ be a solution of EiCP~\eqref{eq:EiCP}. The orthogonality constraint~
\eqref{eq:EiCP_c} 
yields $\lambda~=~\frac{\mathbf x^\top \mathbf A\,\mathbf x}{\mathbf x^\top \mathbf B\,\mathbf x}$, which is well defined as $\mathbf B$ is $\operatorname{PD}$ and $\x\neq \mathbf 0$. By using the definitions in~\eqref{eq:minmaxmu}, we get the result.

\end{proof}

To show that dominance does not hold between $\Gamma$ and the sets $K_1$ and $K_2$, we rely on the following lemma, which characterizes the generalized eigenvalues when $\A$ and $\B$ commute. We then derive two families of problems where the generalized spectrum is either strictly looser (Proposition~\ref{case_loc}) or strictly tighter (Proposition~\ref{case_eig}) than our localization sets.
 
\begin{lemma}\label{lemma_commutation}
Let $\mathbf{A},\mathbf{B}\in \mathbb{S}_n$, assume that $\mathbf{B}$ is positive definite and $\mathbf{A}\mathbf{B}=\mathbf{B}\mathbf{A}$. 
Let $\mu_1^{\mathbf{A}},\dots,\mu_n^{\mathbf{A}}$ be the standard eigenvalues of $\mathbf{A}$ and let $\mu_1^{\mathbf{B}},\dots,\mu_n^{\mathbf{B}}>0$ be those of $\mathbf{B}$, all counted with multiplicity. 
Then, the generalized eigenvalues $\mu_1, \dots, \mu_n$ of the pair $(\mathbf{A},\mathbf{B})$ are exactly the $n$ ratios
\[
\mu_1={\mu^{\mathbf{A}}_{\sigma(1)}}/{\mu^{\mathbf{B}}_{\tau(1)}},\ \ldots,\ 
\mu_n={\mu^{\mathbf{A}}_{\sigma(n)}}/{\mu^{\mathbf{B}}_{\tau(n)}}
\]
for some permutations $\sigma,\tau$ of $[n]$. 
By indexing a common orthonormal eigenbasis, one can take $\sigma$ and $\tau$ as the identity, to obtain $\mu_i=\mu^{\mathbf{A}}_i/\mu^{\mathbf{B}}_i$ for $i\in[n]$.
\end{lemma}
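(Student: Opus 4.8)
The plan is to reduce the generalized eigenproblem to a diagonal one by simultaneously diagonalizing $\A$ and $\B$. First I would invoke the standard spectral result for commuting symmetric matrices (see, e.g., \cite{HornJohnson2013}): since $\A,\B\in\mathbb{S}_n$ and $\A\B=\B\A$, they admit a common orthonormal eigenbasis. Concretely, there is an orthogonal matrix $\mathbf{Q}$ (so $\mathbf{Q}^\top\mathbf{Q}=\I$) whose columns are simultaneous eigenvectors, giving $\mathbf{Q}^\top\A\mathbf{Q}=\mathbf{D}_{\A}$ and $\mathbf{Q}^\top\B\mathbf{Q}=\mathbf{D}_{\B}$, where $\mathbf{D}_{\A}=\diag(\mu_1^{\A},\dots,\mu_n^{\A})$ and $\mathbf{D}_{\B}=\diag(\mu_1^{\B},\dots,\mu_n^{\B})$ collect the eigenvalues of $\A$ and $\B$ indexed along this common basis.

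Next I would pass to the generalized characteristic polynomial. The generalized eigenvalues of $(\A,\B)$ are exactly the roots of $\det(\A-\mu\B)=0$, and conjugating by the orthogonal $\mathbf{Q}$ leaves the determinant unchanged:
\[
\det(\A-\mu\B)=\det\!\big(\mathbf{Q}^\top(\A-\mu\B)\mathbf{Q}\big)=\det(\mathbf{D}_{\A}-\mu\mathbf{D}_{\B})=\prod_{i=1}^n\big(\mu_i^{\A}-\mu\,\mu_i^{\B}\big).
\]
Because $\B$ is positive definite, every $\mu_i^{\B}>0$, so each factor is a genuine linear polynomial in $\mu$ with the single root $\mu_i^{\A}/\mu_i^{\B}$. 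Hence the $n$ generalized eigenvalues are precisely the ratios $\mu_i=\mu_i^{\A}/\mu_i^{\B}$ for $i\in[n]$, the indexing being inherited from the common eigenbasis. Equivalently, substituting $\x=\mathbf{Q}\mathbf{y}$ into $\A\x=\mu\B\x$ decouples it into the scalar equations $\mu_i^{\A}y_i=\mu\,\mu_i^{\B}y_i$, from which the same conclusion follows.

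Finally I would reconcile this indexing with arbitrary separate orderings of the two spectra. If $\mu_1^{\A},\dots,\mu_n^{\A}$ and $\mu_1^{\B},\dots,\mu_n^{\B}$ are listed in some fixed orders that need not agree with the order of the columns of $\mathbf{Q}$, then the pairing along the common basis is described by two permutations $\sigma,\tau$ of $[n]$, giving $\mu_i=\mu_{\sigma(i)}^{\A}/\mu_{\tau(i)}^{\B}$; relabeling the columns of $\mathbf{Q}$, that is, indexing both spectra by the common basis, absorbs these permutations and yields the clean form $\mu_i=\mu_i^{\A}/\mu_i^{\B}$. The only delicate point is the simultaneous diagonalizability itself in the presence of repeated eigenvalues, but this is exactly the content of the cited commuting-matrices theorem and requires no extra argument here; positivity of $\B$ then guarantees that all the ratios are well defined.
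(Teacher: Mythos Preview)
Your proof is correct and follows essentially the same route as the paper: both invoke simultaneous orthogonal diagonalization of commuting symmetric matrices (the paper cites Thm.~2.5.5 in \cite{HornJohnson2013}) and then read off the generalized eigenvalues as the ratios $\mu_i^{\A}/\mu_i^{\B}$. The only cosmetic difference is that you lead with the factorization of $\det(\A-\mu\B)$ while the paper works directly with the decoupled equations $(\mu_i^{\A}-\mu\,\mu_i^{\B})y_i=0$ obtained from the substitution $\x=\mathbf{V}\mathbf{y}$, an argument you also mention as an equivalent alternative.
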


\begin{proof}
By the fact that commuting real symmetric matrices are simultaneously diagonalizable by an orthogonal matrix (see Thm. 2.5.5 in \cite{HornJohnson2013}), there exists an orthogonal $\mathbf{V}\in\mathbb{R}^{n\times n}$ such that
\[
\mathbf{V}^\top \mathbf{A}\,\mathbf{V}=\mathrm{diag}(\mu^{\mathbf{A}}_1,\ldots,\mu^{\mathbf{A}}_n), 
\quad
\mathbf{V}^\top \mathbf{B}\,\mathbf{V}= \mathrm{diag}(\mu^{\mathbf{B}}_1,\ldots,\mu^{\mathbf{B}}_n).
\]
Consider the equation $\mathbf{A}\mathbf{x}=\mu\,\mathbf{B}\mathbf{x}$ for any generalized eigenvalue $\mu$. 
Setting $\mathbf{y}=\mathbf{V}^\top\mathbf{x}$ yields
\[
(\mathbf{V}^\top \mathbf{A}\,\mathbf{V})\mathbf{y}=\mu\,(\mathbf{V}^\top \mathbf{B}\,\mathbf{V})\mathbf{y}
\ \Longleftrightarrow\
(\mu^{\mathbf{A}}_i-\mu\,\mu^{\mathbf{B}}_i)\,y_i=0\quad\text{for all }i\in[n].
\]
If $\mathbf{y}\neq\mathbf{0}$, then some $y_i\neq 0$ enforces $\mu=\mu^{\mathbf{A}}_i/\mu^{\mathbf{B}}_i$. 
Conversely, for each $i\in[n]$, taking $\mathbf{y}=\mathbf{e}_i$ (hence, $\mathbf{x}=\mathbf{V}\mathbf{e}_i$) produces a nonzero solution $\mu_i=\mu^{\mathbf{A}}_i/\mu^{\mathbf{B}}_i$. 
Thus the generalized eigenvalues are precisely the $n$ ratios $\mu^{\mathbf{A}}_i/\mu^{\mathbf{B}}_i$ (counted with multiplicity).

\end{proof}

\begin{proposition}[A family where $K_1 = K_2 \subsetneq \Gamma$]\label{case_loc}
Fix $n\ge 2$ and $\varepsilon>1$. Let $\mathbf E=\mathbf e\,\mathbf e^\top$, and define
\[
\mathbf A:=\mathbf E+\varepsilon\,\mathbf I,\quad
\mathbf B:=((n-1)+\varepsilon)\,\mathbf I-\mathbf E.
\]
Then, $\mathbf B$ is $\operatorname{SDD}$ and $\operatorname{PD}$, and $\mathbf A$ is copositive (so Theorems~\ref{thm:one-row-theorem} and~\ref{thm:two-row-theorem} apply). Moreover,
\[
\operatorname{conv}(K_1) = \operatorname{conv}(K_2) =\left[ \frac{1+\varepsilon}{\,n-2+\varepsilon\,}\ ,\ \frac{n+\varepsilon}{\,\varepsilon-1\,}\right],
\quad
[\mu_{\min}(\mathbf A,\mathbf B),\mu_{\max}(\mathbf A,\mathbf B)]
=\left[ \frac{\varepsilon}{\,n-1+\varepsilon\,}\ ,\ \frac{n+\varepsilon}{\,\varepsilon-1\,}\right].
\]
Hence, $K_1 =K_2 \subsetneq [\mu_{\min}(\mathbf A,\mathbf B),\mu_{\max}(\mathbf A,\mathbf B)]$.
\end{proposition}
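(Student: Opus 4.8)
The plan is to exploit that both $\A$ and $\B$ are polynomials in the all-ones matrix $\mathbf E$, so every row is identical and the whole computation collapses onto a handful of scalars. First I would read off the entries $a_{ii}=1+\varepsilon$, $a_{ij}=1$ for $i\ne j$, $b_{ii}=n-2+\varepsilon$, and $b_{ij}=-1$ for $i\ne j$. The decisive structural feature is the uniform sign pattern: every off-diagonal of $\A$ is positive and every off-diagonal of $\B$ is negative, so that $r_i^-(\A)=0$ and $r_i^+(\B)=0$ for all $i$, while $r_i^+(\A)=r_i^-(\B)=n-1$. Hence $a_i^+=n+\varepsilon$, $a_i^-=1+\varepsilon$, $b_i^+=n-2+\varepsilon$, and $b_i^-=\varepsilon-1$. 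The SDD and PD claims for $\B$ reduce to the inequality $n-2+\varepsilon>n-1$, i.e. $\varepsilon>1$, and copositivity of $\A$ is immediate from $\x^\top\A\,\x=(\e^\top\x)^2+\varepsilon\|\x\|^2\ge 0$.

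With these scalars in hand, $\operatorname{conv}(K_1)$ follows directly from Corollary~\ref{cor:one-row-bounds}: using $\varepsilon-1<n-2+\varepsilon$ to resolve the inner minima and maxima, the lower bound is $a_i^-/b_i^+=(1+\varepsilon)/(n-2+\varepsilon)$ and the upper bound is $a_i^+/b_i^-=(n+\varepsilon)/(\varepsilon-1)$. For $\operatorname{conv}(K_2)$ I would bypass the radical expressions of Corollary~\ref{cor:two-row-bounds} and instead use the factored forms of Lemma~\ref{lem:pol}. Because $r_i^-(\A)=r_i^+(\B)=0$, the correction term in $P^{\operatorname{low}}_{ij}$ vanishes and it reduces to the perfect square $(b_{ii}y-a_{ii})^2$, with double root $a_{ii}/b_{ii}=(1+\varepsilon)/(n-2+\varepsilon)$. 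Meanwhile $P^{\operatorname{up}}_{ij}$ becomes the difference of two squares $(b_{ii}y-a_{ii})^2-(n-1)^2(1+y)^2$, which factors into two linear terms; the larger root comes from the factor $(\varepsilon-1)y-(n+\varepsilon)$ and equals $(n+\varepsilon)/(\varepsilon-1)$ (that this is indeed the larger root may be checked directly, or deduced from the containment $K_2\subset K_1$ of Theorem~\ref{thm:2row-in-1row}). This establishes $\operatorname{conv}(K_2)=\operatorname{conv}(K_1)$, and since by symmetry each set is a single interval, $K_1=K_2$.

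For the generalized spectrum I would invoke Lemma~\ref{lemma_commutation}. As $\A$ and $\B$ are both polynomials in $\mathbf E$, they commute, and $\mathbf E$ has eigenvalue $n$ on $\operatorname{span}(\e)$ and $0$ on $\e^{\perp}$. Therefore $\A$ has eigenvalues $n+\varepsilon$ (on $\e$) and $\varepsilon$ (on $\e^{\perp}$), while $\B$ has eigenvalues $\varepsilon-1$ (on $\e$) and $n-1+\varepsilon$ (on $\e^{\perp}$). Pairing them within the common orthonormal eigenbasis yields the generalized eigenvalues $(n+\varepsilon)/(\varepsilon-1)$ and $\varepsilon/(n-1+\varepsilon)$; since the former exceeds $1$ and the latter is below $1$, they are exactly $\mu_{\max}$ and $\mu_{\min}$.

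The one point demanding genuine care is the strictness of the inclusion. The intervals $\operatorname{conv}(K_1)$ and $\Gamma$ share the upper endpoint $(n+\varepsilon)/(\varepsilon-1)$, so strictness rests entirely on comparing the lower endpoints $(1+\varepsilon)/(n-2+\varepsilon)$ and $\varepsilon/(n-1+\varepsilon)$. Clearing the positive denominators, this amounts to verifying that $(1+\varepsilon)(n-1+\varepsilon)-\varepsilon(n-2+\varepsilon)=n-1+2\varepsilon>0$, which holds since $n\ge 2$ and $\varepsilon>1$. I expect this single sign check to be the main (if mild) obstacle, as it is what certifies $K_1=K_2\subsetneq\Gamma$ rather than mere equality; everything preceding it is bookkeeping on the symmetric structure.
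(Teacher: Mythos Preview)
Your proposal is correct and follows essentially the same route as the paper: both exploit the row-uniformity to read off $r_i^\pm(\A)$ and $r_i^\pm(\B)$, plug these into Lemma~\ref{lem:pol} to compute $K_2$ (obtaining the perfect square $P^{\operatorname{low}}_{ij}$ and the difference of squares $P^{\operatorname{up}}_{ij}$), use Lemma~\ref{lemma_commutation} with the spectrum of $\mathbf E$ to get $\mu_{\min}$ and $\mu_{\max}$, and finish with the same sign check $(1+\varepsilon)(n-1+\varepsilon)-\varepsilon(n-2+\varepsilon)=n-1+2\varepsilon>0$. The only cosmetic differences are that you argue copositivity via the quadratic form $(\e^\top\x)^2+\varepsilon\|\x\|^2$ rather than nonnegativity of $\A$, and you invoke Corollary~\ref{cor:one-row-bounds} for $K_1$ where the paper cites Corollary~\ref{cor:onecop}; since $a_i^->0$ here, the two give identical intervals.
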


\begin{proof}
$\mathbf B$ is $\operatorname{SDD}$ because $b_{ii}=n-2+\varepsilon> \sum_{j\ne i}|b_{ij}|=n-1$ for $\varepsilon>1$, and $\B$ is symmetric with positive diagonal elements, thus $\mathbf B$ is $\operatorname{PD}$. Moreover, $\A$ is nonnegative,
thus it is copositive. The matrices $\mathbf A$ and $\mathbf B$ commute (as $\A\B=\B\A=\delta\I-\mathbf{E}$ for some constant $\delta$) and from Lemma \ref{lemma_commutation}
we can compute the values of $\mu_{\min}$ and $\mu_{\max}$ by calculating the largest and smallest eigenvalues of $\A$ and $\B$. 
To show that, we first recall the spectral properties of $\mathbf{E}$.  
Since $\mathbf{E}=\mathbf{e}\mathbf{e}^{\mathsf T}$, it follows that $\mathbf{E}\mathbf{e}=n\mathbf{e}$ and $\mathbf{E}\mathbf{v}=0$ for all $\mathbf{v}$ such that $\mathbf{v}^{\mathsf T}\mathbf{e}=0$. Hence, $\mathbf{E}$ has eigenvalue $n$ associated with the eigenvector $\mathbf{e}$, and eigenvalue $0$ with multiplicity $n-1$ associated with any vector orthogonal to $\mathbf{e}$. Using this decomposition of $\mathbf{E}$, we can determine the eigenvalues of $\mathbf{A}$ and $\mathbf{B}$ below in (i) and (ii).

\begin{enumerate}[(i)]
    \item For $\mathbf{A}=\mathbf{E}+\varepsilon\mathbf{I}$, we have
\[
\mathbf{A}\mathbf{e}
=(\mathbf{E}+\varepsilon\mathbf{I})\mathbf{e}
=(n+\varepsilon)\mathbf{e},
\quad
\mathbf{A}\mathbf{v}
=(\mathbf{E}+\varepsilon\mathbf{I})\mathbf{v}
=\varepsilon\mathbf{v}
\quad \text{for every $\mathbf{v}$ such that } \mathbf{v}^{\mathsf T}\mathbf{e}=0.
\]
Therefore, from the eigenvalue equations above, the classical spectrum of $\A$ is equal to $\{n+\varepsilon,\;\underbrace{\varepsilon, \dots, \varepsilon}_{n-1 \text{ times}}\}$ and then $\mu_{\min}^\mathbf{A} = \varepsilon$, and $\mu_{\max} ^\mathbf{A} = n+\varepsilon$.

\item For $\mathbf{B}=\big((n-1)+\varepsilon\big)\mathbf{I}-\mathbf{E}$,
\[
\mathbf{B}\mathbf{e}
=\big((n-1)+\varepsilon\big)\mathbf{e}-\mathbf{E}\mathbf{e}
=\big((n-1)+\varepsilon-n\big)\mathbf{e}
=(\varepsilon-1)\mathbf{e},
\]
and for any $\mathbf{v}$ such that $\mathbf{v}^{\mathsf T}\mathbf{e}=0$,
\[
\mathbf{B}\mathbf{v}
=\big((n-1)+\varepsilon\big)\mathbf{v}-\mathbf{E}\mathbf{v}
=\big((n-1)+\varepsilon\big)\mathbf{v}.
\]
Similarly, the spectrum of $\B$ is given by $\{\varepsilon-1,\;\underbrace{n-1+\varepsilon, \dots, \; n-1+\varepsilon}_{{n-1 \text{ times}}}\}$ and thus we have $\mu_{\min} ^\mathbf{B} = \varepsilon-1$ and $\mu_{\max} ^\mathbf{B} = n-1+\varepsilon$.
\end{enumerate}
Hence, $\mu_{\min} (\mathbf A,\mathbf B)=\frac{\varepsilon}{n-1+\varepsilon}$ and $\mu_{\max} (\mathbf A,\mathbf B)= \frac{n+\varepsilon}{\varepsilon-1}$.

For $K_2$, substitute $r_i^-(\mathbf A)=0$, $r_i^+(\mathbf A)=n-1$, $r_i^-(\mathbf B)=n-1$, $r_i^+(\mathbf B)=0$ into polynomial expressions of Lemma~\ref{lem:pol}.
This yields
\[
P^{\, \operatorname{low}}_{ij}(y)=\bigl((n-2+\varepsilon)y-(1+\varepsilon)\bigr)^2,
\quad
P^{\, \operatorname{up}}_{ij}(y)=\bigl((n-2+\varepsilon)y-(1+\varepsilon)\bigr)^2-(n-1)^2(1+y)^2,
\]
so the intervals in $K_2$ are all equal to $\left[ \frac{1+\varepsilon}{\,n-2+\varepsilon\,}\ ,\ \frac{n+\varepsilon}{\,\varepsilon-1\,}\right]$ independently of $(i,j)$. 
From Corollary~\ref{cor:onecop}, note that all intervals in $K_1$ also coincide with this. 
The upper bound coincides with $\mu_{\max}$, but the lower bound strictly improves upon $\mu_{\min}$, as:
\[
\frac{1+\varepsilon}{n-2+\varepsilon}-\frac{\varepsilon}{n-1+\varepsilon}
=\frac{(n-1)+2\varepsilon}{(n-2+\varepsilon)(n-1+\varepsilon)}>0,
\]
\end{proof}

\begin{proposition}[A family where $\Gamma \subsetneq K_1 = K_2$]\label{case_eig}
Fix $n\ge 2$ and parameters $\beta>R>0$ and $c>0$. Let $\mathbf E=\mathbf e\,\mathbf e^\top$, set $\rho:=R/(n-1)$, and define
\[
\mathbf B:=\beta\,\mathbf I+\rho\,(\mathbf E-\mathbf I),\quad
\mathbf A:=c\,\mathbf B.
\]
Then $\mathbf B$ is $\operatorname{SDD}$ and $\operatorname{PD}$, and $\mathbf A$ is copositive (so Theorems~\ref{thm:one-row-theorem} and~\ref{thm:two-row-theorem} apply). Moreover
\[
\operatorname{conv}(K_1) = \operatorname{conv}(K_2) = \Bigl[\frac{c\,\beta}{\beta+R}\,,\ \frac{c\,(\beta+R)}{\beta}\,\Bigr],
\quad
[\mu_{\min}(\mathbf A,\mathbf B),\mu_{\max}(\mathbf A,\mathbf B)]
=\{c\}.
\]
Hence, $[\mu_{\min}(\mathbf A,\mathbf B),\mu_{\max}(\mathbf A,\mathbf B)] \subsetneq K_1=K_2$.
\end{proposition}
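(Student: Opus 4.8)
The plan is to verify the three structural claims in order—that $\mathbf{B}$ is $\operatorname{SDD}$ and $\operatorname{PD}$, that $\mathbf{A}$ is copositive, and then the two main equalities—exploiting heavily the fact that $\mathbf{A}=c\mathbf{B}$ is a scalar multiple of $\mathbf{B}$. First I would check the diagonal dominance: each diagonal entry of $\mathbf{B}$ equals $\beta+\rho\cdot 0=\beta$ (since $\mathbf{E}-\mathbf{I}$ has zero diagonal), while each off-diagonal entry equals $\rho=R/(n-1)$, so the off-diagonal row sum is $(n-1)\rho=R$. The $\operatorname{SDD}$ condition is then exactly $\beta>R$, which holds by hypothesis. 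Symmetry with positive diagonal then gives $\operatorname{PD}$ by the cited Theorem~6.1.10 in~\cite{HornJohnson2013}. Copositivity of $\mathbf{A}=c\mathbf{B}$ is immediate: $\mathbf{B}$ is $\operatorname{PD}$ hence copositive, and multiplying by $c>0$ preserves this.

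Next I would compute the generalized spectrum, which is the easy and clean part because $\mathbf{A}$ and $\mathbf{B}$ trivially commute (indeed $\mathbf{A}=c\mathbf{B}$). By Lemma~\ref{lemma_commutation}, each generalized eigenvalue is a ratio $\mu_i^{\mathbf{A}}/\mu_i^{\mathbf{B}}$ along a common eigenbasis; but since $\mathbf{A}=c\mathbf{B}$ we have $\mu_i^{\mathbf{A}}=c\,\mu_i^{\mathbf{B}}$ for every $i$, so every ratio collapses to $c$. Thus $\mu_{\min}(\mathbf A,\mathbf B)=\mu_{\max}(\mathbf A,\mathbf B)=c$ and $\Gamma=\{c\}$. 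Alternatively, one may observe directly that $\mathbf{A}\mathbf{x}=c\mathbf{B}\mathbf{x}$ for all $\mathbf{x}$, so $c$ is the only generalized eigenvalue.

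For the localization sets I would extract the relevant row quantities from Definition~\ref{def:rpm}. Since all off-diagonal entries of $\mathbf{B}$ are the positive value $\rho$, we get $r_i^-(\mathbf B)=0$ and $r_i^+(\mathbf B)=(n-1)\rho=R$, with $b_{ii}=\beta$, so $b_i^+=\beta+R$ and $b_i^-=\beta$. As $\mathbf{A}=c\mathbf{B}$ with $c>0$, likewise $r_i^-(\mathbf A)=0$, $r_i^+(\mathbf A)=cR$, $a_{ii}=c\beta$, giving $a_i^+=c(\beta+R)$ and $a_i^-=c\beta$. Substituting into Corollary~\ref{cor:onecop} (or Corollary~\ref{cor:one-row-bounds}), every one-row interval equals $[\,a_i^-/b_i^+,\ a_i^+/b_i^-\,]=[\,c\beta/(\beta+R),\ c(\beta+R)/\beta\,]$, independently of $i$. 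For $K_2$ I would plug the same values into the factored forms~\eqref{eq:puplong}--\eqref{eq:plowlong} of Lemma~\ref{lem:pol}; because $r_i^-(\mathbf A)=r_i^-(\mathbf B)=0$, the products $L_i(y)L_j(y)$ and $U_i(y)U_j(y)$ simplify and $P^{\operatorname{low}}_{ij}$, $P^{\operatorname{up}}_{ij}$ both factor as perfect-square-type quadratics sharing the endpoints above, so every two-row interval also equals $[\,c\beta/(\beta+R),\ c(\beta+R)/\beta\,]$. Taking unions gives $\operatorname{conv}(K_1)=\operatorname{conv}(K_2)$ as claimed.

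The only real obstacle is the bookkeeping in the $K_2$ computation: I must confirm that the smallest root of $P^{\operatorname{low}}_{ij}$ and the largest root of $P^{\operatorname{up}}_{ij}$ land exactly on $c\beta/(\beta+R)$ and $c(\beta+R)/\beta$, and that the outer $\max\{0,\cdot\}$ in the lower endpoint is inactive because $c\beta/(\beta+R)>0$. Once the two interval endpoints match those of $K_1=K'_1$, the strict inclusion $\Gamma=\{c\}\subsetneq K_1=K_2$ follows since $c$ lies strictly inside $[\,c\beta/(\beta+R),\ c(\beta+R)/\beta\,]$: indeed $c\beta/(\beta+R)<c<c(\beta+R)/\beta$ because $\beta<\beta+R$ and hence $\beta/(\beta+R)<1<(\beta+R)/\beta$, using $R>0$. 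This strict two-sided inequality closes the proof.
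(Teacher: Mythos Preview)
Your proposal is correct and matches the paper's approach step by step: the structural checks on $\mathbf{B}$ and $\mathbf{A}$, the identification of $\Gamma=\{c\}$ (the paper writes $\mathbf{B}^{-1}\mathbf{A}=c\mathbf{I}$ rather than invoking Lemma~\ref{lemma_commutation}, but this is the same observation), the row-sum data, and the substitution into Corollary~\ref{cor:onecop} and Lemma~\ref{lem:pol} are all exactly as in the paper. One small clarification for the $K_2$ bookkeeping you flagged: the two polynomials are \emph{differences} of squares rather than perfect squares, namely $P^{\operatorname{up}}_{ij}(y)=(y\beta-c\beta)^2-(cR)^2$ and $P^{\operatorname{low}}_{ij}(y)=(y\beta-c\beta)^2-(yR)^2$, whose largest and smallest roots are respectively $c(\beta+R)/\beta$ and $c\beta/(\beta+R)$, precisely the endpoints you need.
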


\begin{proof}
Each row of $\mathbf B$ has diagonal entry $\beta>0$ and the sum of the off-diagonal elements is $(n-1)\rho$, which is $R$ by definition. Also, since $\beta>R$, then $\mathbf B$ is $\operatorname{SDD}$ and then $\operatorname{PD}$. The matrix $\mathbf A$ is copositive as $c>0$ and $\B$ is PD. Therefore Theorems~\ref{thm:one-row-theorem} and~\ref{thm:two-row-theorem} are valid.
For the generalized spectrum, $\mathbf B^{-1}\mathbf A=c\,\mathbf I$, hence 
$\mu_{\min}(\mathbf A,\mathbf B)=\mu_{\max}(\mathbf A,\mathbf B)=c$, and therefore the interval $\Gamma$ is the point $\{c\}$.

To compute $K_2$, note the row-sum quantities are constant across $i\in [n]$:
\[
a_{ii}=c\beta,\quad r_i^+(\mathbf A)=cR,\quad r_i^-(\mathbf A)=0,\quad
b_{ii}=\beta,\quad r_i^+(\mathbf B)=R,\quad r_i^-(\mathbf B)=0.
\]
Substituting in the polynomials of Lemma~\ref{lem:pol}
yields, for any $i\neq j$,
\[
P^{\, \operatorname{low}}_{ij}(y)
=(y\beta -c\beta )^2-(yR)^2
\mbox{ and }
P^{\, \operatorname{up}}_{ij}(y)
=(y\beta -c\beta )^2-(cR)^2.
\]
Hence,
\[
\min\{y:\ P^{\, \operatorname{low}}_{ij}(y)=0\}=\frac{c\,\beta}{\beta+R},
\quad
\max\{y:\ P^{\, \operatorname{up}}_{ij}(y)=0\}=\frac{c\,(\beta+R)}{\beta},
\]
independently of $(i,j)$, so $K_2=[\,c\,\beta/(\beta+R),\ c\,(\beta+R)/\beta\,]$. Using the formula in Corollary~\ref{cor:onecop} gives exactly the same endpoints, hence $K_1=K_2$.

Finally, since $\beta/(\beta+R)<1<(\beta+R)/\beta$ for $\beta>R>0$, we have $\frac{c\,\beta}{\beta+R}\;<\ c\;<\ \frac{c\,(\beta+R)}{\beta}$, so $\{c\}\subsetneq K_1=K_2$ strictly.

\end{proof}

\section{Discussion}\label{secd}

We can unify the formula of $K_1$ in Corollary~\ref{cor:onecop} and of $K_2$ in Lemma~\ref{lem:pol}, under their common assumptions ($\A$ copositive and $\B$ SDD and PD) as follows. For $m=1$ and $m=2$,
\begin{align}
K_m = &\bigcup_{\substack{S\subset[n]\\
|S|=m}}\,[\,\max\{0, \, \min\{y\in\mathbb{R}\mid P^{\, \mathrm{low}}_{S}(y)=0\}\},\,\max\{y\in\mathbb{R}\mid P^{\, \mathrm{up}}_{S}(y)=0\}\,]\\
\text{given} \quad &P^{\mathrm{up}}_{S}(y)
:=\prod_{i\in S}\bigl(b_{ii}y-a_{ii}\bigr)\;-\;\prod_{i\in S}\bigl(r_i^{+}(\mathbf{A})+y\,r_i^{-}(\mathbf{B})\bigr), \label{eq:PupS}\\
&P^{\mathrm{low}}_{S}(y)
:=\prod_{i\in S}\bigl(a_{ii}-b_{ii}y\bigr)\;-\;\prod_{i\in S}\bigl(r_i^{-}(\mathbf{A})+y\,r_i^{+}(\mathbf{B})\bigr) \quad \forall y \in \mathbb{R}.\label{eq:PlowS}
\end{align}


In these two cases, $m$ denotes the number of rows considered to derive each interval. One may ask if this formula extends to $m\ge 3$. Unfortunately, Example~\ref{Example1} provides a counter-example for $m=3$.
Indeed, defining \eqref{eq:PupS} and \eqref{eq:PlowS} for the triple $S=\{1,2,3\}$ gives
\[
P^{\mathrm{up}}_{S}(y)=(6y-14)(10y-11)(10y-13)-2,\quad
P^{\mathrm{low}}_{S}(y)=(14-6y)(11-10y)(13-10y).
\]
The smallest real root of $P^{\mathrm{low}}_{S}$ is $1.1$, yet the complementarity eigenvalue 
$\lambda\approx0.822\in\Pi$ lies to its left. 
The largest real root of $P^{\mathrm{up}}_{S}$ is $\approx2.336$, yet 
$\lambda \approx2.347\in\Pi$ lies to its right (indeed $P^{\mathrm{up}}_{S}(2.347)\approx 8.5>0$). 
The question of finding a hierarchy of localization sets based on the size of the considered submatrices remains thus open.

\section{Conclusions} \label{secc}

We analyzed the symmetric EiCP$(\A,\B)$ with $\B$ being positive definite and strictly diagonally dominant. Extending the He-Liu-Shen enclosures to a more general positive definite matrix $\B$, we present Gershgorin-type localization sets $K_1$ and $K_2$, the latter improving upon the former, assuming $\A$ copositive. The bounds on these sets provide inexpensive approximations of the smallest and largest complementarity eigenvalues, as do the smallest and largest generalized eigenvalues, and no dominance exists between these bounds.

Our results also suggest several directions for future research.
In an algorithmic use, the localization sets or their bounds provide fast certificates for candidates of complementarity eigenpairs. The discussion opens the question of generalizing the approach to multi-row coupling. Finally, this approach could be adapted to EiCP variants, such as the tensor eigenvalue complementarity problem.
\vskip 6mm
\noindent{\bf Acknowledgements} 

\noindent The authors thank Jorma K. Merikoski, Pentti Haukkanen, Timo Tossavainen, and Mika Mattila for the useful conversations, in particular regarding Lemma~\ref{lemma_commutation}.

\bibliographystyle{plain} 
\bibliography{EiCP.bib} 

\end{document}